\newcommand{\Ad}{\operatorname{Ad}}
\newcommand{\id}{\operatorname{id}}
   \theoremstyle{plain}
   \newtheorem{thm}{Theorem}
   \newtheorem{prop}[thm]{Proposition}
   \newtheorem{lem}[thm]{Lemma}
   \newtheorem{cor}[thm]{Corollary}
   \theoremstyle{definition}
   \newtheorem{defn}[thm]{Definition}
   \newtheorem{remark}[thm]{Remark}
\author{A. Korchagin}
\date{}
\address{Moscow State University,
Leninskie Gory, Moscow,
119991, Russia}
\email{mogilevmedved@yandex.ru}
\thanks{The author acknowledges partial support by the RFBR grant No. 14-01-00007.}
\title{MF-property for countable discrete groups}
\begin{document}
\maketitle

\begin{abstract}
In this article we study MF-property for countable discrete groups, i.e. groups which admit embedding into unitary group of $C^*$-algebra $\prod M_n/\oplus M_n$. We prove that Baumslag group $\langle a,b|a^{a^b}=a^2\rangle$ has MF-property and check some permanent facts about MF-groups.

\end{abstract}

\section{Introduction}

By definition MF-groups are countable groups which admit embedding into $U(\prod_{n=1}^\infty M_n/\oplus_{n=1}^\infty M_n)$ where $M_n$ - algebra of $n$-by-$n$ complex-valued matrices, $U(A)$ - group of unitary elements of $C^*$-algebra $A$. MF-groups were first considered in [7], where it was proved that for amenable groups MF-property equivalent to quasidiagonality of $C^*_r(G)$. In [7] it was also proved that LEF-groups have MF-property. The main motivation for considering and studying amenable MF-groups is famous conjecture that for amenable group $G$ algebra $C^*_r(G)$ is quasidiagonal. Recently this conjecture was proved in [17], i.e. all countably amenable groups are MF. So it is very natural to examine what non-amenable groups have this property. The first reason is the connection with vector bundles. From [14] we know how to construct a vector bundle on $BG$ from homomorphism $G\to U(\prod M_n/\oplus M_n)$ (such homomorphisms are usually called asymptotic homomorphisms or MF-representation). This vector bundle has some good properties which make it similar to vector bundle which is constructed from finitedimensional representation of group $G$. But on the other hand this construction often produce  whole $K^0(BG)$, which is in some sense finitedimensional way to Novikov conjecture.

Our second motivation - is connection between MF-property and hyperlinearity. It is easy to see that MF-property for $G$ is equivalent to possibility of embedding $G\hookrightarrow \prod U_n/\oplus U_n$, where $U_n$ is usual $n$-unitary group and $\oplus U_n=\{u_n\in U_n: \|u_n-1\|\to 0\}$. Recall that countable group $G$ is called hyperlinear if it admits embedding $G\hookrightarrow\prod U_n/\oplus_2 U_n$, where $\oplus_2 U_n=\{u_n\in U_n:\|u_n-1\|_2\to 0\}$ and $\|a\|_2=\sqrt{\tau(a^*a)}$ and $\tau$ - normalized trace on $M_n$
(it seems to be interesting to consider more general approximation in $GL_n$ instead of $U_n$. Apriori we get another classes of groups after replacing $U_n$ by $GL_n$ in definition of hyperlinear or MF-group. Concept of linear sofic groups is example of such generalizations, see [1] in this direction).
Due to similarity of definitions we can check some facts about MF-groups just rewriting proof of similar facts about hyperlinear groups into MF-language, but not always we can do that. For example, it is known that amalgamated product of two hyperliner group over amenable group us hyperlinear; while we don`t even know is it true that amalgamated product of two MF-groups over finite group is MF-group. While we have $\|a\|_2\leq \|a\|$ and so every MF-representation is hyperlinear representation, faithful MF-representation could be non-faithful hyperlinear representation and MF-property does not automatically imply hyperlinearity. Now nobody knows example of non hyperlinear group (existence of non MF-groups is also open question), but one of the main candidates for this role is Higman group $\langle a,b,c,d|a^b=a^2,b^c=b^2,c^d=c^2,d^a=d^2\rangle$. The famous property of this group is nonexistence of nontrivial finitedimensional representation and it is not clear, how we can construct nontrivial MF-representation or hyperlinear representation. There is homomorphisms $\langle a,b,c,d|a^b=a^2,b^c=b^2,c^d=c^2,d^a=d^2\rangle\to G=\langle x,y|x^{x^y}=x^2,[x,y^4]=1\rangle$ defined via $a\mapsto x, b\mapsto x^y,c\mapsto x^{y^2},d\mapsto x^{y^3}$. So if we have nontrivial represenation of group G - it is a great chance to construct a representation of Higman group, but group G is too complicated and for partial understanding its representations we consider less complicated Baumslag group $\langle x,y,|x^{x^y}=x^2\rangle$.

\section{Permanent facts}

We will consider only countable groups, all maps between groups are assumed to be unital. We also use notation $a^b=b^{-1}ab$.

\begin{defn}
Countable group G is called MF-group (or has MF-property) if there is injective homomorphism $G\hookrightarrow\prod U_n/\oplus U_n$, where $\oplus U_n=\langle\{u_n\}\in\prod U_n:\|u_n-1\|\to 0\rangle$.
\end{defn}

\begin{prop}
The following conditions are equivalent

1) G is MF-group.

2) There are maps $\alpha_n:G\to U_n$ such that for every $g,h\in G$ we have $\|\alpha_n(gh)-\alpha_n(g)\alpha_n(h)\|\to 0$ and for every $g\neq 1$ we have $\|\alpha_n(g)-1\|\nrightarrow 0$

3) For every finite set $F\subset G$ there is $\delta$ such that for every $\varepsilon>0$ there is $n$ and map $\alpha:G\to U_n$ such that for every $g,h\in F$ we have $\|\alpha(gh)-\alpha(g)\alpha(h)\|<\varepsilon$ and for every $g\in F$ such that $g\neq 1$ we have $\|\alpha(g)-1\|>\delta$

4) For some subsequense $\{n_k\}$ we have inclusion $G\hookrightarrow \prod U_{n_k}/\oplus U_{n_k}$.

5) $G\hookrightarrow U(\prod M_n/\oplus M_n)$.

6) $G\hookrightarrow U(A)$ for some MF-algebra $A$.

\end{prop}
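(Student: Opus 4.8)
The plan is to treat condition 2) as the central hub and connect every other item to it, keeping the genuine analytic content in the passage between the ``global'' sequence formulation and the ``local'' finite-set formulation. First I would establish $1)\Leftrightarrow 2)$ by unwinding the quotient. A homomorphism $\varphi\colon G\to\prod U_n/\oplus U_n$ amounts to a choice, for each $g$, of a lift $(\alpha_n(g))_n\in\prod U_n$ that is well defined modulo $\oplus U_n$; since $\oplus U_n=\{(u_n):\|u_n-1\|\to 0\}$, the requirement that $\varphi$ be multiplicative reads $\|\alpha_n(gh)-\alpha_n(g)\alpha_n(h)\|\to 0$ (multiply by the unitary $(\alpha_n(g)\alpha_n(h))^{-1}$, which preserves the norm), and injectivity of $\varphi$ means exactly that $\|\alpha_n(g)-1\|\not\to 0$ whenever $g\neq 1$. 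This is verbatim condition 2).

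Next, $2)\Rightarrow 3)$ is a direct-sum argument. Given the $\alpha_n$ and a finite $F$, put $\delta=\tfrac12\min_{g\in F\setminus\{1\}}\limsup_n\|\alpha_n(g)-1\|>0$, which depends only on $F$. For a prescribed $\varepsilon$ choose $n_0$ so large that every $\alpha_n$ with $n\ge n_0$ has multiplicativity defect $<\varepsilon$ on the finite set of pairs from $F$, and for each $g\in F\setminus\{1\}$ pick an index $n_g\ge n_0$ with $\|\alpha_{n_g}(g)-1\|>\delta$. Then $\alpha=\bigoplus_{g}\alpha_{n_g}$ has defect $<\varepsilon$ on $F$ while $\|\alpha(g)-1\|=\max_{g'}\|\alpha_{n_{g'}}(g)-1\|\ge\|\alpha_{n_g}(g)-1\|>\delta$, which is 3).

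The converse $3)\Rightarrow 2)$ is where I expect the main obstacle. Exhausting $G=\bigcup_k F_k$ by finite sets and applying 3) to $F_k$ with $\varepsilon=1/k$ produces maps $\beta_k$ whose multiplicativity defect on any fixed pair tends to $0$, so the sequence $(\beta_k)$ defines a homomorphism into the quotient; the trouble is injectivity. Condition 3) only guarantees $\|\beta_k(g)-1\|>\delta_{F_k}$ for $g\in F_k$, and a priori $\delta_{F_k}\to 0$, so this does not by itself yield $\|\beta_k(g)-1\|\not\to 0$. The delicate point is to secure, simultaneously with vanishing multiplicativity defect, a lower bound for each fixed $g$ that does not decay. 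My plan is to run a diagonal construction in which each element $g$ is assigned a dedicated direct summand certifying the fixed constant obtained by applying 3) to the pair $\{1,g\}$; the obstruction to overcome is that in the operator norm a direct sum takes the maximum of the summand defects (there is no averaging, unlike the $\|\cdot\|_2$ world of hyperlinearity), so the dedicated summands must be chosen with improving precision and arranged so that, for each fixed pair, only finitely many summands are ever inaccurate and their influence is eventually absorbed. Making this bookkeeping precise is the technical heart of the equivalence.

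The remaining items connect to 1) and 2) more mechanically. For $2)\Leftrightarrow 4)$ one direction is trivial, since the full sequence is itself a subsequence; conversely, given $G\hookrightarrow\prod U_{n_k}/\oplus U_{n_k}$ I would fill the gaps by padding, setting $\alpha_n(g)=\alpha_{n_k}(g)\oplus 1_{n-n_k}$ for $n_k\le n<n_{k+1}$. Because $(u\oplus 1)(v\oplus 1)=uv\oplus 1$, the defect is unchanged, and $\|\alpha_n(g)-1\|=\|\alpha_{n_k}(g)-1\|$, so both approximate multiplicativity and nonvanishing survive. For $1)\Leftrightarrow 5)$ the content is unitary lifting: the natural map $\prod U_n/\oplus U_n\to U(\prod M_n/\oplus M_n)$ is an isomorphism, since any unitary in the corona lifts to some $(a_n)\in\prod M_n$ with $a_n^*a_n,a_na_n^*\to 1$, whence $a_n$ is invertible for large $n$ and the unitary part $u_n$ of its polar decomposition satisfies $\|u_n-a_n\|\to 0$, giving a unitary lift $(u_n)\in\prod U_n$ with kernel matching $\oplus U_n$. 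Finally $5)\Leftrightarrow 6)$ is the definition of an MF-algebra together with permanence under passage to sub-$C^*$-algebras: $6)\Rightarrow 5)$ composes $G\hookrightarrow U(A)$ with $A\hookrightarrow\prod M_n/\oplus M_n$, while $5)\Rightarrow 6)$ takes $A$ to be the sub-$C^*$-algebra generated by the image of $G$, which is MF as a subalgebra of $\prod M_n/\oplus M_n$.
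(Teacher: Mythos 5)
The paper offers no proof of this proposition (it is dismissed as an ``easy exercise''), so there is nothing to compare line by line; judged on its own, your proposal handles $1)\Leftrightarrow 2)$, $2)\Rightarrow 3)$, $2)\Leftrightarrow 4)$, $1)\Leftrightarrow 5)$ and $5)\Leftrightarrow 6)$ correctly, but the implication $3)\Rightarrow 2)$ is left as a ``plan,'' and the plan as described does not work. The dedicated summand you attach to a fixed $g$, obtained by applying 3) to the pair $\{1,g\}$, carries no control whatsoever of the multiplicativity defect on the larger sets $F_k$: condition 3) applied to $\{1,g\}$ bounds $\|\alpha(gh)-\alpha(g)\alpha(h)\|$ only for $g,h\in\{1,g\}$. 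Since the operator norm of a block-diagonal matrix is the \emph{maximum} of the block norms, a single such block makes the defect of $\beta_k$ on $F_k$ of order $1$ for every $k$, so the sequence $(\beta_k)$ is not an asymptotic homomorphism; there is no mechanism by which inaccurate summands are ``eventually absorbed'' in the sup norm. Conversely, applying 3) to $F_k$ itself gives small defect on $F_k$ but only the lower bound $\delta(F_k)$, which may tend to $0$, so neither half of your bookkeeping supplies a summand that is simultaneously accurate on $F_k$ and bounded below at $g$ by a constant independent of $k$.

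The missing idea is the spectral amplification that the paper develops immediately afterwards (its Proposition 4 and the proof of Proposition 5): replace $\alpha$ by $\alpha\oplus 1$ so that $1\in\sigma(\alpha(g))$ and hence $\operatorname{diam}\sigma(\alpha(g))\geq\|\alpha(g)-1\|>\delta$, and iterate $\gamma=\Ad$. For each such $u$ some power $k<k_\delta=[\log_2(\pi/\delta)]+1$ satisfies $\|\gamma^k(u)-1\|\geq\sqrt2$, while $\|\gamma(u)-1\|\leq 2\|u-1\|$ shows that each $\gamma^k$ multiplies the defect by at most $2^{k_\delta}$. Taking $\beta=\bigoplus_{k<k_\delta}\gamma^k(\alpha\oplus1)$, where $\alpha$ comes from 3) applied to $(F,\varepsilon/2^{k_\delta})$, yields for every finite $F$ maps with arbitrarily small defect on $F$ and the \emph{universal} lower bound $\|\beta(g)-1\|\geq\sqrt2$ for all nontrivial $g\in F$; only after this uniformization does your exhaustion-and-diagonal argument close the loop and give $3)\Rightarrow 2)$. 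You should either import this amplification explicitly or find a substitute for it; without one, the hardest of the six equivalences remains unproved.
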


\begin{proof}
Easy exercise.
\end{proof}

\begin{defn}
We will call homomorphism $\alpha:G\to \prod U_n/\oplus U_n=U(\prod M_n/\oplus M_n)$ by asymptotic homomorphisms (or MF-representation). We will call maps $\alpha_n$ (which appear from some lift $G\to \prod U_n$ of $\alpha$) by almost representations.
\end{defn}

There is very important homomorphism $U_n\to U_{n^2}$, $u\mapsto \Ad(u)$, where $\Ad(u)$ is unitary matrix of conjugate by $u$ in the space $\mathbb{C}^{n^2}=M_n(\mathbb{C})$ with inner product $\langle A,B\rangle=\sum_{i,j}\bar{A}_{i,j}B_{i,j}$.

\begin{prop}
For every $\delta>0$ there is $k_\delta\in\mathbb{N}$ such that for every $u\in U_n$ with $diam(\sigma(u))>\delta$ there is $k<k_\delta$ such that $\|\gamma^k(u)-1\|\geq\sqrt{2}$.
\end{prop}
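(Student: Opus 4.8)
The plan is to diagonalize $\gamma(u)=\Ad(u)$ explicitly, reduce the norm condition to a statement about the arguments of eigenvalues, and then track a single eigenvalue through the iterations. The point is that the spectrum of any $\Ad$ is symmetric about the real axis, and this lets one \emph{double} an angle at the cost of one application of $\gamma$; the whole statement then collapses to an elementary fact about the doubling map on the circle.

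First I would compute $\sigma(\gamma(u))$. Fix an orthonormal eigenbasis $e_1,\dots,e_n$ of $u$ with $ue_i=\lambda_i e_i$, $|\lambda_i|=1$. The rank-one matrices $E_{ij}=e_ie_j^*$ form an orthonormal basis of $(M_n,\langle\cdot,\cdot\rangle)$, and $\gamma(u)E_{ij}=uE_{ij}u^*=\lambda_i\bar\lambda_j E_{ij}$, so $\sigma(\gamma(u))=\{\lambda_i\bar\lambda_j\}$. Two features matter: $1\in\sigma(\gamma(u))$ (take $i=j$) and the set is invariant under $\mu\mapsto\bar\mu$ (swap $i,j$). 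Both properties are inherited by every $\gamma^k(u)=\gamma(\gamma^{k-1}(u))\in U_{n^{2^k}}$, since each is again $\Ad$ of a unitary. Consequently, if $e^{i\beta}\in\sigma(\gamma^{k}(u))$ then also $e^{-i\beta}\in\sigma(\gamma^{k}(u))$, and therefore $e^{i\beta}\cdot\overline{e^{-i\beta}}=e^{2i\beta}\in\sigma(\gamma^{k+1}(u))$. Iterating from $k=1$, a single eigenvalue angle $\alpha_0$ present in $\sigma(\gamma(u))$ yields $e^{\,i2^{j}\alpha_0}\in\sigma(\gamma^{\,1+j}(u))$ for every $j\ge 0$.

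Next I would translate the norm condition. As $\gamma^k(u)$ is normal, $\|\gamma^k(u)-1\|=\max\{|\mu-1|:\mu\in\sigma(\gamma^k(u))\}$, and for $\mu=e^{i\phi}$ one has $|\mu-1|^2=2-2\cos\phi$; hence $\|\gamma^k(u)-1\|\ge\sqrt2$ exactly when some eigenvalue has $\operatorname{Re}\mu\le 0$, i.e.\ its argument is at geodesic distance $\ge\pi/2$ from $0$. Now from $\operatorname{diam}(\sigma(u))>\delta$ pick eigenvalues $\lambda_i,\lambda_j$ with $|\lambda_i-\lambda_j|>\delta$ and choose $\alpha_0\in(-\pi,\pi]$ with $e^{i\alpha_0}=\lambda_i\bar\lambda_j$. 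Since $\lambda_i-\lambda_j=\lambda_j(\lambda_i\bar\lambda_j-1)$ we get $|e^{i\alpha_0}-1|=|\lambda_i-\lambda_j|>\delta$, so $2|\sin(\alpha_0/2)|>\delta$ and $|\alpha_0|\ge\delta':=2\arcsin(\delta/2)>0$; moreover $e^{i\alpha_0}\in\sigma(\gamma(u))$.

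It remains to iterate the doubling. By conjugate symmetry I may assume $\alpha_0\in[\delta',\pi]$. If $\alpha_0\ge\pi/2$ we are done with $k=1$. Otherwise let $j^*$ be the least $j$ with $2^{j}\alpha_0\ge\pi/2$; then $2^{j^*-1}\alpha_0<\pi/2$, so one more doubling gives $2^{j^*}\alpha_0<\pi$, i.e.\ $2^{j^*}\alpha_0\in[\pi/2,\pi)$, which has geodesic distance $\ge\pi/2$ from $0$. From $2^{j^*-1}\delta'\le 2^{j^*-1}\alpha_0<\pi/2$ we obtain $j^*<\log_2(\pi/\delta')$, and by the doubling property $e^{\,i2^{j^*}\alpha_0}\in\sigma(\gamma^{\,j^*+1}(u))$, whence $\|\gamma^{\,j^*+1}(u)-1\|\ge\sqrt2$. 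Setting $k_\delta:=2+\lceil\log_2(\pi/\delta')\rceil$, which depends on $\delta$ alone and not on $n$ or $u$, one checks $k:=j^*+1<k_\delta$ in all cases, completing the argument. The part requiring the most care is this last paragraph: one must guarantee that the doubling orbit lands in the ``bad'' arc at its \emph{first} crossing of $\pi/2$ rather than jumping over it into a neighborhood of $0$, which is precisely why I track the least index $j^*$ and use that one doubling cannot more than double a sub-$\pi/2$ angle. Everything else — the spectral computation, normality, and the estimate $|e^{i\alpha_0}-1|>\delta\Rightarrow|\alpha_0|\ge 2\arcsin(\delta/2)$ — is routine.
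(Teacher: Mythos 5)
Your proposal is correct and follows essentially the same route as the paper: diagonalize $u$, observe that $\sigma(\gamma(u))$ consists of the ratios $\lambda_i\bar\lambda_j$ and is closed under conjugation, use the symmetric pair $e^{\pm i\beta}$ to double an angle with each application of $\gamma$, and stop at the first iterate landing in $[\pi/2,\pi)$. The only cosmetic difference is the chord-to-angle bookkeeping (your $\delta'=2\arcsin(\delta/2)$ versus the paper's direct use of $|x-y|\ge|e^{ix}-e^{iy}|>\delta$, giving $k_\delta=[\log_2(\pi/\delta)]+1$); both yield a bound depending on $\delta$ alone.
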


\begin{proof}
Put $k_\delta=[\log_2(\frac{\pi}{\delta})]+1$. We may assume that $u=diag(e^{i\alpha_1},...,e^{i\alpha_n})$. We have $\gamma(u)=diag\{e^{i(\alpha_i-\alpha_j)}\}$ in the basis consisting of matrix units in $M_n(\mathbb{C})=\mathbb{C}^{n^2}$. Let $x,y$ be arguments of eigenvalues of $u$ such that $|e^{ix}-e^{iy}|=diam(\sigma(u))$. So, we have $e^{i(x-y)}$ and $e^{i(y-x)}$ among eigenvalues of $\gamma(u)$. By induction we have $e^{i2^k(x-y)}$ and $e^{i2^k(y-x)}$ among eigenvalues of $\gamma^k(u)$. Consider minimal $k$ such that $2^k(x-y)\in[\frac\pi 2,\pi)$. In this case we have $|e^{i2^k(x-y)}-1|\geq\sqrt{2}$ and so $\|\gamma^{k+1}(u)-1\|\geq\sqrt{2}$. It is easy to see that $k<k_{diam(\sigma(u))}$.
\end{proof}

Later for operator we will use standard notation $x=_\varepsilon y$ when $\|x-y\|<\varepsilon$.

\begin{prop}
Let $\alpha^\prime:G\to\prod U_n/\oplus U_n$ - some MF-representation such that $\alpha^\prime(g)\neq 1$ for some $g\in G$. Then there is MF-representation $\beta$ such that $\|\beta(g)-1\|\geq\sqrt{2}$.
\end{prop}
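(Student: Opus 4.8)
The plan is to amplify the distance of $\alpha'(g)$ from the identity by iterating the map $\gamma$ of the preceding Proposition, after first fixing the one thing that can go wrong. Fix a lift $(\alpha'_n)$, $\alpha'_n\colon G\to U_n$, of $\alpha'$. Since $\alpha'(g)\neq 1$, the quotient norm $\|\alpha'(g)-1\|=\limsup_n\|\alpha'_n(g)-1\|$ is a positive number, so there are $\delta>0$ and an infinite set $N_0\subset\mathbb N$ with $\|\alpha'_n(g)-1\|\geq\delta$ for $n\in N_0$. I would like to feed this into the preceding Proposition, but that Proposition amplifies $diam(\sigma(u))$, not $\|u-1\|$, and these two quantities behave completely differently near scalars. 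Indeed $\gamma$ annihilates scalar unitaries, so if $\alpha'_n(g)$ happens to be close to $e^{i\theta}1$ then $\|\alpha'_n(g)-1\|$ may be large while $diam(\sigma(\alpha'_n(g)))$ is tiny, and no number of applications of $\gamma$ helps. This scalar degeneracy is the only real obstacle.

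To remove it I would pass from $\alpha'$ to its direct sum with the trivial representation: put $\beta'_n(h)=\alpha'_n(h)\oplus 1\in U_{n+1}$. The multiplicative defect of $\beta'_n$ equals that of $\alpha'_n$ in norm, so $\beta'$ is again an MF-representation (the reindexing $n\mapsto n+1$ is harmless by part (4) of the characterization). The gain is that now $1\in\sigma(\beta'_n(h))$ for every $h$, so for $n\in N_0$
\[
diam\bigl(\sigma(\beta'_n(g))\bigr)\;\geq\;\max_{\lambda\in\sigma(\alpha'_n(g))}|\lambda-1|\;=\;\|\alpha'_n(g)-1\|\;\geq\;\delta .
\]
Thus the spectra of the $\beta'_n(g)$ have diameter bounded below along $N_0$, which is exactly the hypothesis needed for the amplification Proposition.

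Finally apply the preceding Proposition with this $\delta$: it yields $k_\delta$ so that for every $n\in N_0$ there is $k_n<k_\delta$ with $\|\gamma^{k_n}(\beta'_n(g))-1\|\geq\sqrt2$. Since the $k_n$ lie in the finite set $\{0,\dots,k_\delta-1\}$, by pigeonhole a single value $k$ occurs for an infinite subset $N_1\subset N_0$. Set $\beta=\gamma^{k}\circ\beta'$. As $\gamma$ is a genuine homomorphism with $\|\gamma(u)-1\|\leq 2\|u-1\|$, it carries $\oplus U_m$ into $\oplus U_{m^2}$, hence descends to the quotients, and $\beta$ is an MF-representation. Because $\|\gamma^{k}(\beta'_n(g))-1\|\geq\sqrt2$ for all $n\in N_1$, we obtain $\|\beta(g)-1\|=\limsup_n\|\gamma^{k}(\beta'_n(g))-1\|\geq\sqrt2$, as required. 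I expect the verification that $\gamma$ preserves $\oplus U_m$ and the spectral inequality above to be routine; the conceptual content is entirely in the direct-sum trick, which converts a large norm-distance into a large spectral diameter that $\gamma$ can then blow up.
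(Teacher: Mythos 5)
Your proposal is correct and follows essentially the same route as the paper: the direct sum with the trivial representation to force $1\in\sigma(\cdot)$, the conversion of norm-distance into spectral diameter, and amplification by iterating $\gamma$. The only cosmetic difference is that you fix a single exponent $k$ by pigeonhole on a subsequence, whereas the paper lets $k(n)<k_\delta$ vary with $n$ and instead bounds the multiplicative defect uniformly by $2^{k_\delta}\varepsilon$ using $\|\gamma(u)-1\|\leq 2\|u-1\|$; both are valid.
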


\begin{proof}
For convenience we will consider MF-representation $\alpha^\prime$ as set $\{\alpha_n^\prime\}$ of almost representation. Putting $\alpha_n=\alpha_n^\prime\oplus 1$ we have for every $g\in G$ that $1\in\sigma(\alpha_n(g))$. Since $\alpha(g)\neq 1$, there is $\delta>0$ and $n_0$ such that for every $n>n_0$ we have $\|\alpha_n(g)-1\|>\delta$ (if it is necessary we consider some subsequence). Since $1\in\sigma(\alpha_n(g))$ we have $diam(\sigma(\alpha_n(g)))>\delta$. By Proposition 4 we can construct $k_\delta$ and numbers $k(n)$ with $k(n)<k_\delta$ such that for every $n>n_0$ we have $\|\beta_n(g)-1\|\geq\sqrt{2}$ where $\beta_n=\gamma^{k(n)}\circ\alpha_n$.
Moreover $\{\beta_n\}$ is asymptotic homomorphism. Indeed, for every $\varepsilon>0$ and every finite set $K\subset G$ we can ensure that $\alpha_n(q)\alpha_n(h)\alpha_n^{-1}(qh)=_\varepsilon 1$ for every $q,h\in K$ and $n$ big enough. Since $\gamma^k$ -homomorphism then
$\beta_n(q)\beta_n(h)\beta_n^{-1}(qh)=\gamma^{k(n)}(\alpha_n(q)\alpha_n(h)\alpha_n^{-1}(qh))$. From the formula $\gamma(diag(\{e^{i\alpha_i}\}))=diag(\{e^{i(\alpha_i-\alpha_j)}\})$ it is easy follows that $\|\gamma(u)-1\|\leq 2\|u-1\|$, so $\beta_n(q)\beta_n(h)\beta_n^{-1}(qh)=_{\varepsilon 2^{k_\delta}}1$. It means that $\beta$ - asymptotic representation.
\end{proof}

\begin{prop}
Residually MF-group is MF-group.
\end{prop}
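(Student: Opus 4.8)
The plan is to verify condition 3 of Proposition 2 directly, since that local, finite-dimensional characterization interacts cleanly with homomorphic images and direct sums, and pleasantly lets us avoid any $\limsup$/common-subsequence issues. Fix a finite set $F \subset G$ and write $F^* = F \setminus \{1\}$. By the definition of residual MF-ness, for each $g \in F^*$ I choose a homomorphism $\pi_g : G \to H_g$ with $H_g$ an MF-group and $\pi_g(g) \neq 1$.

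Next, for each $g \in F^*$ I apply condition 3 to the MF-group $H_g$ and the finite set $\pi_g(F) \subset H_g$. This produces a constant $\delta_g > 0$ depending only on $\pi_g(F)$, hence ultimately only on $F$. Since $F^*$ is finite I set $\delta = \min_{g \in F^*} \delta_g > 0$; crucially this $\delta$ is fixed before $\varepsilon$ enters, matching the quantifier order $\forall F\, \exists \delta\, \forall \varepsilon$ in condition 3.

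Now, given $\varepsilon > 0$: for each $g \in F^*$, condition 3 for $H_g$ yields a map $\psi_g : H_g \to U_{m_g}$ with $\|\psi_g(uv) - \psi_g(u)\psi_g(v)\| < \varepsilon$ for all $u, v \in \pi_g(F)$ and $\|\psi_g(k) - 1\| > \delta_g$ for every nontrivial $k \in \pi_g(F)$; in particular $\|\psi_g(\pi_g(g)) - 1\| > \delta_g$. Set $\alpha^{(g)} = \psi_g \circ \pi_g : G \to U_{m_g}$. Because $\pi_g$ is an honest homomorphism, for $a, b \in F$ we have $\alpha^{(g)}(ab) = \psi_g(\pi_g(a)\pi_g(b))$ with $\pi_g(a), \pi_g(b) \in \pi_g(F)$, so the multiplicative defect of $\alpha^{(g)}$ on $F$ is $< \varepsilon$, while $\|\alpha^{(g)}(g) - 1\| > \delta_g$. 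Finally I form the block-diagonal direct sum $\alpha = \bigoplus_{g \in F^*} \alpha^{(g)} : G \to U_N$ with $N = \sum_{g} m_g$. Using that the operator norm of a block-diagonal matrix is the maximum of the block norms, the defect of $\alpha$ on $F$ stays $< \varepsilon$, and for each $g \in F^*$ the $g$-th block forces $\|\alpha(g) - 1\| \geq \|\alpha^{(g)}(g) - 1\| > \delta_g \geq \delta$. This is exactly condition 3 for $G$, so $G$ is MF by Proposition 2.

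I do not expect a serious obstacle; the only real care is the bookkeeping with quantifiers, namely keeping $\delta$ independent of $\varepsilon$, together with checking that composing with $\pi_g$ and then summing preserves both the smallness of the multiplicative defect and the uniform lower bound. Alternatively one could enumerate $G \setminus \{1\} = \{g_k\}$, use Proposition 5 to boost each to an MF-representation with $\|\beta^{(k)}_n(g_k) - 1\| \geq \sqrt{2}$ on a tail, and assemble a single faithful asymptotic homomorphism by a diagonal block sum $\gamma_n = \beta^{(1)}_n \oplus \cdots \oplus \beta^{(r(n))}_n$ with $r(n) \to \infty$ chosen slowly; this works as well but requires the extra care of selecting $r(n)$ to keep the defects uniformly small, which the condition-3 route sidesteps entirely.
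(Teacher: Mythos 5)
Your proof is correct and is essentially the paper's argument: both proceed by taking, for each nontrivial $g$ in a finite set, a representation pulled back from an MF quotient that separates $g$, and then forming the block-diagonal direct sum over the finite set, with the operator norm of a block sum being the maximum of the block norms. The only difference is packaging: the paper first boosts each separating representation to satisfy $\|\beta^g(g)-1\|\geq\sqrt{2}$ via Proposition 5 and then assembles a single faithful asymptotic representation along an exhaustion $G=\cup K_n$, whereas you target condition 3 of Proposition 2 with an $F$-dependent $\delta$, thereby delegating that uniformization step to the asserted equivalence in Proposition 2.
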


\begin{proof}
Let $G$ - residually MF-group. It means that for every $g\neq 1$ there is MF-homomorphisms $\alpha^g$ (here $g$ is index) such that $\alpha^g(g)\neq 1$. By Proposition 5 we can find some another MF-homomorphism $\beta^g$ such that $\|\beta^g(g)-1\|\geq\sqrt{2}$. Consider $\varepsilon>0$ and finite set $K\subset G$ and let $\beta^K=\oplus_{g\in K}\beta^g$. As finite direct sum of MF-representation $\beta^K$ is also MF-representation, so for some big enough $N=N(K,\varepsilon)$ we have $\beta^K_N(gh)=_\varepsilon\beta^K_N(g)\beta^K_N(h)$ for every $g,h\in K$ and $\|\beta^K_N(g)-1\|\geq 1$ for every $g\in K$ such that $g\neq 1$.

Consider our group $G=\cup K_n$ as union os increasing sequence of finite sets. Then $\omega_n=\beta^{K_n}_{N(K_n,\frac1n)}$ is faithful asymptotic representation. It is easy to see that $\|\omega(g)-1\|\geq 1$ for every $g\neq 1$, and for every $\varepsilon$ and every finite $K\subset G$ we can find $n$ such that $\varepsilon>\frac1n$ and $K\subset K_n$. So $\|\omega_n(g)\omega_n(h)-\omega_n(gh)\|<\varepsilon$ for every $g,h\in K$.
\end{proof}

Proofs of Propositions 4-6 are almost the same as in the case of hyperlinear groups (see [8]).

Using $C^*$-theory we can present shorter proof of Proposition 6. Let $G$ -residually MF-groups, so there is MF-algebras $A_n$ such that $G\hookrightarrow\prod A_n$ (where image lies in unitary group). Consider $B_n=A_1\oplus...\oplus A_n$, homomorphism $\alpha:\prod A_n\hookrightarrow \prod B_n$ defined via $\alpha(\{a_n\})=\{a_1\oplus...\oplus a_n\}$. Consider composition map $\beta:G\hookrightarrow\prod A_n\hookrightarrow B_n\to\prod B_n/\oplus B_n$. It is easy to see that $\beta$ - injective and $C^*(\beta(G))$ is separable subalgebra of $\prod B_n/\oplus B_n$. So $C^*(\beta(G))$ is MF-algebra by [5].

\begin{prop}
Let $G$ - group. Then $G$ is MF-group iff for every $\varepsilon>0$ and every finite $K\subset G$ there is map $\alpha:G\to U_n$ to some finitedimensional unitary group such that for every $g,h\in K$ inequality $\|\alpha(gh)-\alpha(g)\alpha(h)\|<\varepsilon$ holds and for every nontrivial $g\in G$ we have $\|\alpha(g)-1\|\geq\sqrt{2}$.
\end{prop}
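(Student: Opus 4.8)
The statement asserts an equivalence, and the two implications are of very different character.

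The reverse implication is immediate. If for every $\varepsilon$ and every finite $K$ there is a map $\alpha:G\to U_n$ which is $\varepsilon$-multiplicative on $K$ and satisfies $\|\alpha(g)-1\|\geq\sqrt2$ for all nontrivial $g$, then in particular, for any finite $F\subset G$ the constant $\delta=1$ witnesses condition~(3) of Proposition~2: for each $\varepsilon$ we obtain such an $\alpha$, and $\|\alpha(g)-1\|\geq\sqrt2>1=\delta$ holds for the nontrivial $g\in F$. Hence $G$ is MF.

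For the forward implication I fix $\varepsilon>0$ and a finite $K\subset G$ and set $\widetilde K=K\cup KK\cup\{1\}$, which is again finite. The key reduction is that the multiplicativity requirement $\|\alpha(gh)-\alpha(g)\alpha(h)\|<\varepsilon$ for $g,h\in K$ only constrains the values of $\alpha$ on $\widetilde K$; therefore on $G\setminus\widetilde K$ I am free to put $\alpha(g)=-I$, which costs nothing since $\|-I-I\|=2\geq\sqrt2$. The whole problem thus reduces to producing a single finite-dimensional unitary map that is $\varepsilon$-multiplicative on $K$ and separates every nontrivial element of the finite set $\widetilde K$ by $\sqrt2$. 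Since $G$ is MF, condition~(3) of Proposition~2 applied to $\widetilde K$ yields a $\delta>0$ uniform over $\widetilde K$; let $k_\delta$ be the integer supplied by Proposition~4 for this $\delta$. Applying condition~(3) once more with tolerance $\varepsilon'=\varepsilon\,2^{-k_\delta}$ produces $\alpha:G\to U_m$ with $\|\alpha(gh)-\alpha(g)\alpha(h)\|<\varepsilon'$ for $g,h\in\widetilde K$ and $\|\alpha(g)-1\|>\delta$ for every nontrivial $g\in\widetilde K$. Following the device of Proposition~5 I pass to $\alpha'=\alpha\oplus1$, so that $1\in\sigma(\alpha'(g))$ and hence $\operatorname{diam}\sigma(\alpha'(g))\geq\|\alpha'(g)-1\|=\|\alpha(g)-1\|>\delta$ for the nontrivial $g\in\widetilde K$. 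By Proposition~4, for each such $g$ there is an exponent $k(g)<k_\delta$ with $\|\gamma^{k(g)}(\alpha'(g))-1\|\geq\sqrt2$.

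The one genuine difficulty is that the separating exponent $k(g)$ from Proposition~4 depends on the element $g$, and no single power of $\gamma$ can be expected to work for all of $\widetilde K$ at once (iterating $\gamma$ too far lets the eigenvalues $e^{i2^k\theta}$ wrap back towards $1$). I resolve this by collecting all admissible exponents into one direct sum,
\[
\beta=\bigoplus_{k=0}^{k_\delta-1}\gamma^{k}(\alpha').
\]
Because $\beta(g)-I$ is block-diagonal with $\gamma^{k(g)}(\alpha'(g))-I$ among its blocks, one gets $\|\beta(g)-1\|\geq\sqrt2$ for every nontrivial $g\in\widetilde K$ simultaneously. Multiplicativity survives: each $\gamma^{k}$ is a homomorphism and the estimate $\|\gamma(u)-1\|\leq2\|u-1\|$ from the proof of Proposition~4 gives $\|\beta(g)\beta(h)-\beta(gh)\|\leq2^{k_\delta}\|\alpha(g)\alpha(h)-\alpha(gh)\|<\varepsilon$ for $g,h\in K$ (note $gh\in\widetilde K$). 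Overwriting $\beta$ by $-I$ on $G\setminus\widetilde K$ — which does not touch any value entering the multiplicativity test — yields the required map, completing the forward implication. I expect this reconciliation of the element-dependent exponent with a single finite-dimensional map to be the only real obstacle; everything else is the bookkeeping already rehearsed in Propositions~4--6.
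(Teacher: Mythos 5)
Your argument is correct, and it takes a genuinely different route from the paper. The paper's entire proof of the forward direction is a pointer back to the construction in Proposition 6: there one takes, for each $g$ in the finite set, a separate MF-representation $\beta^g$ with $\|\beta^g(g)-1\|\geq\sqrt{2}$ (obtained via Proposition 5), forms $\beta^K=\oplus_{g\in K}\beta^g$, and evaluates at a large index $N(K,\varepsilon)$. You instead start from a \emph{single} almost multiplicative map separating all of $\widetilde K=K\cup KK\cup\{1\}$ by a uniform $\delta$ (condition (3) of Proposition 2), and resolve the element-dependence of the exponent in Proposition 4 by summing $\gamma^k(\alpha')$ over all $k<k_\delta$ rather than by summing over elements of $K$. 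Both arguments ultimately rest on the $\Ad$-amplification of Propositions 4--5, but yours buys two things the paper's one-line proof does not deliver as written: (i) the construction $\beta^K_{N(K,\varepsilon)}$ from Proposition 6 only guarantees $\|\cdot-1\|\geq 1$ on $K$ (the $\sqrt2$ bound degrades in the limit) and says nothing about elements outside $K$, whereas the statement demands $\geq\sqrt2$ for \emph{every} nontrivial $g\in G$; your assignment of $-I$ off $\widetilde K$, together with applying Proposition 4 directly to the fixed finite-dimensional unitaries $\alpha'(g)$, gives the full claim with the genuine constant $\sqrt2$; (ii) your tolerance bookkeeping $\varepsilon'=\varepsilon 2^{-k_\delta}$ combined with $\|\gamma(u)-1\|\leq 2\|u-1\|$ makes the multiplicativity estimate explicit. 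The only stylistic caution is that the block-sum over $k=0,\dots,k_\delta-1$ multiplies the dimension by $k_\delta$, which is harmless here since no dimension control is required.
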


\begin{proof}
$2\Rightarrow 1$ is obvious. For $1\Rightarrow 2$ we can consider $\alpha=\beta^K_{N(K,\varepsilon)}$ from the proof of Proposition 6.
\end{proof}

Proposition 7 means that for MF-groups there is injective MF-homomorphism $\alpha$ such that $\alpha(G)$ is discrete in the induced topology of $U(\prod M_n/\oplus M_n)$.

\begin{prop}
Let $G=\langle a_1,...,a_n|r_1,...,r_m=1\rangle$ be finite presented group. Then $G$ is MF-group iff for every $\varepsilon>0$ and every finite set $K\subset G$ there is $N\in\mathbb{N}$ and matrices $A_1,...,A_n\in U_N$ such that

1) For every nontrivial $k\in F$ there is some corresponding word $\omega_k\in\mathbb{F}_n=\langle a_1,...,a_n\rangle$ in free group (i.e. for natural quotient homomorphism $\pi:\mathbb{F}_n\to G$ we have $\pi(\omega_k)=k$. For convenience we also assume $\omega_{a_j}=a_j$) such that $\|\omega_k(A_1,...,A_n)-1\|\geq 1$

2) $r_j(A_1,...,A_n)=_\varepsilon 1$
\end{prop}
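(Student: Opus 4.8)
The plan is to prove the two implications separately, treating the words $\omega_k$ as data that is fixed before $\varepsilon$ is chosen (this is how the forward direction naturally produces them, and it is exactly what the converse will need). For the direction ``$G$ is MF $\Rightarrow$ the matrices exist'' I would feed Proposition 7 with a sufficiently large finite set and a sufficiently small tolerance and then read off the matrices as the images of the generators. First I fix, once and for all, a lift $\omega_k\in\mathbb{F}_n$ of each nontrivial $k\in K$ (with $\omega_{a_j}=a_j$); these words depend only on $K$, not on $\varepsilon$. Let $F$ be the finite set consisting of the group elements represented by all prefixes of the words $\omega_k$ and of the relators $r_j$, together with the generators, their inverses and $1$. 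Applying Proposition 7 to $F$ and a tolerance $\varepsilon'$ yields a map $\alpha\colon G\to U_N$ with $\|\alpha(gh)-\alpha(g)\alpha(h)\|<\varepsilon'$ for $g,h\in F$ and $\|\alpha(g)-1\|\ge\sqrt2$ for every nontrivial $g$. I then set $A_j=\alpha(a_j)$.

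The technical heart of the forward direction is a telescoping stability estimate: for any word $w=s_1\cdots s_L$ all of whose prefixes represent elements of $F$, one has $\|w(A_1,\dots,A_n)-\alpha(\pi(w))\|\le L\varepsilon'$. I would prove this by induction on $L$, comparing the partial products $s_1\cdots s_i(A)$ with $\alpha(\pi(s_1\cdots s_i))$ and using unitarity to peel off one letter at a time; inverse letters are handled by the estimate $\alpha(g\,a_j^{-1})\approx\alpha(g)\alpha(a_j)^{-1}$, itself a consequence of approximate multiplicativity on $F$. Applied to $w=r_j$ (where $\pi(r_j)=1$, and where $\|\alpha(1)-1\|<\varepsilon'$ since a unitary close to its own square is close to $1$) this gives $\|r_j(A)-1\|<(L_j+1)\varepsilon'$, hence condition 2). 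Applied to $w=\omega_k$ together with $\|\alpha(k)-1\|\ge\sqrt2$ it gives $\|\omega_k(A)-1\|\ge\sqrt2-L\varepsilon'$. Choosing $\varepsilon'$ small compared with both $\sqrt2-1$ and $\varepsilon$ makes 1) and 2) hold simultaneously.

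For the converse ``matrices exist $\Rightarrow$ $G$ is MF'' I would pass to the algebra $\mathcal A=\prod U_n/\oplus U_n=U(\prod M_n/\oplus M_n)$. Take an increasing exhaustion $K_m\nearrow G$ and fix the lifts $\omega_k$ once, independently of $\varepsilon$; then apply the hypothesis to each $K_m$ with $\varepsilon=1/m$ to obtain $N_m$ and $A^{(m)}\in U_{N_m}^{\,n}$. Put $\mathbf A_j=[A_j^{(m)}]_m\in\mathcal A$. Because $\|r_j(A^{(m)})-1\|<1/m\to0$, every relator maps to $1$ in the quotient, so $a_j\mapsto\mathbf A_j$ extends to a genuine group homomorphism $\Phi\colon G\to U(\mathcal A)$ (in the quotient the relations hold exactly, so there is no word-complexity issue). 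For nontrivial $g$, evaluating the fixed word $\omega_g$ at $\mathbf A$ gives $\Phi(g)=[\omega_g(A^{(m)})]_m$, and since $\|\omega_g(A^{(m)})-1\|\ge1$ for all large $m$ we obtain $\|\Phi(g)-1\|_{\mathcal A}\ge1$, so $\Phi$ is injective. By Proposition 2 this means $G$ is an MF-group.

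I expect the main obstacle to be precisely the step in the converse where the separating word is substituted into the limit unitaries $\mathbf A$. This is legitimate only because $\omega_g$ is literally the same word at every index $m$; if the words were allowed to vary with $\varepsilon$, then $[\omega_g^{(m)}(A^{(m)})]_m$ need not equal $\Phi(g)$, the discrepancy being governed by the area of $\omega_g^{(m)}\,\omega_g^{-1}$ in the presentation, which is not controlled as $m\to\infty$. Thus the real content is the interchange of quantifiers, namely that the words may be chosen before $\varepsilon$ exactly as the forward direction produces them, while in the forward direction the only delicate point is keeping the length factor $L$ in the telescoping estimate harmless by shrinking $\varepsilon'$.
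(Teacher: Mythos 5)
Your proposal is correct, and your forward direction is essentially the paper's: both apply Proposition 7 to a large enough finite set and convert approximate multiplicativity of $\alpha$ into the stability estimate $\|\omega_k(A_1,\dots,A_n)-\alpha(k)\|=O(L\varepsilon')$ (the paper states this as a one-line limit, you spell out the telescoping). The converse is where you genuinely diverge. The paper stays at the level of almost representations: it sets $\alpha(k)=\omega_k(A_1,\dots,A_n)$ on $K=F\cdot F$ and bounds $\|\alpha(gh)-\alpha(g)\alpha(h)\|$ by $C\varepsilon$, where $C$ is the maximal number of relator insertions and free cancellations needed to carry $\omega_{gh}$ into $\omega_g\omega_h$ --- an area-type estimate in the presentation --- and then invokes the almost-representation characterization of MF. You instead push the matrices into $\prod M_{N_m}/\oplus M_{N_m}$, where condition 2) makes the relations hold exactly, so the universal property of the presentation hands you a genuine homomorphism $\Phi$ and injectivity is read off by evaluating the fixed words; this buys you freedom from all word-complexity bookkeeping. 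Note, though, that both routes rest on the same quantifier convention you announce at the outset: the paper's constant $C$ depends on the chosen words $\omega_k$, so if these were allowed to vary with $\varepsilon$ the product $C\varepsilon$ would not be controlled --- exactly the failure mode you identify for your own limit construction. You make this hypothesis explicit (and correctly observe that the forward direction supplies words independent of $\varepsilon$, so the equivalence is unharmed); the paper leaves it implicit. One cosmetic point: your embedding lands in $\prod U_{N_m}/\oplus U_{N_m}$ over a subsequence of matrix sizes, which is covered by condition 4) of Proposition 2.
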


\begin{proof}
Let us prove $\Leftarrow$. Let $\varepsilon>0$ and $F\subset G$. Consider matrices $A_1,...,A_n\in U_N$ corresponding to $\varepsilon$ and finite set $K=F\cdot F=\{gh:g,h\in F\}$. Define our almost representation $\alpha:G\to U_N$ in such way: $\alpha(k)=\omega_k(A_1,...,A_N)$ (for nontrivial $k\in K$) and arbitrary on $G\backslash K$. Let $C_{g,h}$ - minimal number of operations of type $r_j\leftrightarrow 1$, $a_j^{-1}a_j\leftrightarrow 1$, $a_ja_j^{-1}\leftrightarrow 1$ which is necessary to transform $\omega_{gh}$ into $\omega_g\omega_h$ (since $\pi(\omega_{gh})=\pi(\omega_g\omega_h)$ this transformation is possible). Put $C=\max_{g,h\in F} C_{g,h}$. Then it is easy to see that $\|\alpha(gh)-\alpha(g)\alpha(h)\|<C\varepsilon$ for all $g,h\in F$. These almost representations $\{\alpha\}$ generate faithful asymptotic representation because $\|\omega_k(A_1,...,A_n)-1\|\geq 1$ for nontrivial $k$ by assumption.

Let us prove $\Rightarrow$. Let $G$ - MF-group and $\alpha_N:G\to U_N$ faithful asymptotic representation. Using Proposition 7 we may assume that $\|\alpha_N(g)-1\|\geq\sqrt{2}$ for all $N$ and $g\neq 1$. Put $\omega_k\in\mathbb{F}_n$ arbitrary with property $\pi(\omega_k)=k$. As $\|\alpha_N(gh)-\alpha_N(g)\alpha_N(h)\|\to 0$ we have $\|\alpha_N(\omega_g(a_1,...,a_n))-\omega_g(\alpha_N(a_1),...,\alpha_N(a_n))\|\to 0$ as $N\to\infty$. Since $\newline{\limsup_N\|\alpha_N(\omega_g(a_1,...,a_n))-1\|\geq\sqrt{2}}$ we have $\|\omega_g(\alpha_N(a_1),...,\alpha_N(a_n))-1\|\geq 1$ for every nontrivial $g\in F$ and $N$ big enough. Analogously using $\|\alpha_N(gh)-\alpha_N(g)\alpha_N(h)\|\to 0$ we deduce $\|r_j(\alpha_N(a_1),...,\alpha_N(a_n))-1\|<\varepsilon$ for every $j$ and $N$ big enough. It means that there is some number $N_0$ such that matrices $A_i=\alpha_{N_0}(a_i)$ satisfy properties 1) and 2).
\end{proof}

As for amenable groups MF-property equivalent to quasidiagonality of $C^*(G)$ (see [7]) we have the following important version of main theorem from [17]:

\begin{thm}
Amenable groups are MF-groups.
\end{thm}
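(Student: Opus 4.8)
The plan is to read Theorem 9 simply as the MF-translation of the quasidiagonality theorem of [17]. First I would recall the equivalence already quoted from [7]: for a countable amenable group $G$, the group $G$ is MF if and only if the reduced group algebra $C^*_r(G)$ is quasidiagonal. Since $G$ is amenable the full and reduced completions coincide, so it is immaterial which one we work with. The substantial input is then the main result of [17] (the Tikuisis--White--Winter quasidiagonality theorem specialised to group algebras, i.e.\ Rosenberg's conjecture), which asserts precisely that $C^*_r(G)$ is quasidiagonal for every countable discrete amenable $G$. Combining these two statements yields the theorem at once, and the only thing worth making explicit is the elementary implication ``quasidiagonal $\Rightarrow$ MF'', which I would phrase in the language of Proposition 2 so that the note stays self-contained.

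To produce an almost representation in the sense of Proposition 2(2), I would feed in the defining net for quasidiagonality of the unital separable algebra $C^*_r(G)$: a sequence of unital completely positive maps $\phi_n:C^*_r(G)\to M_{k_n}$ that is asymptotically multiplicative and asymptotically isometric. For $g\in G$ write $u_g=\lambda(g)$ for the corresponding unitary in $C^*_r(G)$. Since $\phi_n$ is unital and asymptotically multiplicative, $\phi_n(u_g)^*\phi_n(u_g)=_{\varepsilon}\phi_n(u_g^*u_g)=\phi_n(1)=1$ for $n$ large, so $\phi_n(u_g)$ is an asymptotically unitary contraction. Replacing $\phi_n(u_g)$ by the nearest unitary $\alpha_n(g)$ moves it by an amount tending to $0$, so $\{\alpha_n\}$ inherits asymptotic multiplicativity from $\{\phi_n\}$. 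Faithfulness is then automatic: asymptotic isometry gives $\|\alpha_n(g)-1\|\to\|u_g-1\|$, and because the left regular representation is faithful we have $\|u_g-1\|>0$ for every $g\neq 1$, whence $\|\alpha_n(g)-1\|\nrightarrow 0$. Thus $\{\alpha_n\}$ satisfies condition 2) of Proposition 2 and $G$ is MF.

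The real content, and the only genuine obstacle, lies entirely inside [17]: the proof that $C^*_r(G)$ is quasidiagonal rests on the Tikuisis--White--Winter theorem for separable nuclear $C^*$-algebras satisfying the UCT, invoked through the nuclearity and UCT of $C^*_r(G)$ for amenable $G$ together with its faithful canonical trace. For the present paper this is a black box; everything above it is either the bookkeeping of Proposition 2 or the standard fact that quasidiagonality furnishes an MF-embedding. I would therefore present the argument as a short deduction, citing [7] for the equivalence and [17] for quasidiagonality, and keeping the unitarization paragraph only to anchor the conclusion in the MF-formalism of Section 2.
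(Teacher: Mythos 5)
Your proposal is correct and follows exactly the route the paper takes: the paper states Theorem 9 with no proof beyond the sentence preceding it, namely that MF-property for amenable groups is equivalent to quasidiagonality of $C^*_r(G)$ by [7] and that quasidiagonality is supplied by the main theorem of [17]. Your additional paragraph unitarizing the asymptotically multiplicative u.c.p.\ maps is a correct elaboration of the implication ``quasidiagonal $\Rightarrow$ MF'' that the paper leaves implicit, but it does not change the argument.
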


Using Proposition 6 we immediately deduce the following fact:

\begin{cor}
Residually MF-groups are MF-groups.
\end{cor}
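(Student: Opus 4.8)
The plan is to recognize the corollary as an immediate instance of Proposition 6, so that essentially no new work is required. Unwinding the definition, a residually MF-group $G$ is one for which, given any nontrivial $g\in G$, there exists an MF-representation $\alpha^g$ with $\alpha^g(g)\neq 1$. But this separating family of MF-representations is exactly the input that Proposition 6 consumes: its proof takes such representations, upgrades each to one satisfying $\|\beta^g(g)-1\|\geq\sqrt{2}$ via Proposition 5, and then assembles them over an exhaustion $G=\bigcup_n K_n$ by finite sets into a single faithful asymptotic representation. Thus the first step is simply to quote Proposition 6, from which it follows that $G$ is MF.

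The reason the corollary is stated right after Theorem 9 is that Theorem 9 is what makes the hypothesis easy to verify in practice, and the genuinely useful direction is the passage from residual amenability to the MF-property. The plan there is to start from the residual data — for each nontrivial $g$ a homomorphism $\phi_g:G\to H_g$ onto an amenable group with $\phi_g(g)\neq 1$ — and to invoke Theorem 9 to obtain an embedding $\iota_g:H_g\hookrightarrow\prod U_n/\oplus U_n$ into the unitary MF-group. Composing, $\iota_g\circ\phi_g$ is an MF-representation of $G$ that does not kill $g$, so $G$ is residually MF, and the previous paragraph applies.

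I do not expect any genuine obstacle, since the two ingredients already do all the work. The only point to check is that the composite $\iota_g\circ\phi_g$ is again an asymptotic homomorphism: this is routine, because $\phi_g$ is a strict group homomorphism while $\iota_g$ is asymptotically multiplicative, so the composition inherits asymptotic multiplicativity and the nonvanishing condition $\iota_g(\phi_g(g))\neq 1$ is preserved. Hence the whole argument reduces to the bookkeeping of feeding Theorem 9 into Proposition 6.
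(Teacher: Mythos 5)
Your proposal is correct and matches the paper exactly: the corollary as stated is an immediate restatement of Proposition 6, and the paper's own justification is simply the phrase ``Using Proposition 6 we immediately deduce the following fact.'' Your additional observation --- that the practically useful instance is obtained by feeding Theorem 9 into Proposition 6 to handle residually amenable groups --- is precisely how the paper uses the corollary in the discussion of Baumslag--Solitar groups that follows it.
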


This corollary covers very wide class of countable groups, for example Baumslag-Solitar groups $B(n,m)=\langle a,b| b^{-1}a^nb=a^m\rangle$ (from [12] we know that $B(n,m)^{\prime\prime}=\mathbb{F}$. This imply that group $B(n,m)$ is residually solvable and so residually amenable because solvable groups are amenable (see [4, Example 2.6.5])). Another way for checking MF-property for Baumslag-Solitar groups $\langle a,b| b^{-1}a^nb=a^m\rangle$ is direct following the proof of the main theorem of [16] with using [13] for appearing amalgamated products. This works because all approximations in [16] are norm approximations. 

The easiest example of non-residually solvable group is Baumslag group $\langle a,b| a^{a^b}=a^2\rangle$. This group is also MF-group and last paragraph is devoted to proof of this fact. We do not know is this group is residually amenable.

\begin{prop}
Let $G_1,G_2,...$ - MF-groups. Then $\bigoplus G_j$ also MF-group.
\end{prop}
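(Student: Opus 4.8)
The plan is to avoid building an almost representation of $\bigoplus_j G_j$ by hand, and instead reduce everything to Proposition 6 (equivalently Corollary 10), which states that a residually MF-group is MF. So the entire task becomes: show that $H := \bigoplus_j G_j$ is residually MF. First I would record that $H$ is indeed countable, which is what makes the MF-property meaningful for it: $H$ is the increasing union $\bigcup_m (G_1\times\cdots\times G_m)$ of finite products of countable groups, and a finite product of countable groups is countable, so a countable union of these is countable. This is precisely why one takes the restricted product $\bigoplus_j G_j$ rather than the full product $\prod_j G_j$, which would be uncountable once infinitely many $G_j$ are nontrivial.

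For each index $i$ let $p_i:H\to G_i$ be the coordinate projection, which is a group homomorphism. Fix a nontrivial element $x=(x_j)_j\in H$. Since $x\neq 1$, there is an index $i$ with $x_i\neq 1$. Because $G_i$ is an MF-group it admits a faithful MF-representation $\alpha:G_i\to\prod U_n/\oplus U_n$, so in particular $\alpha(x_i)\neq 1$. The composite $\alpha\circ p_i:H\to\prod U_n/\oplus U_n$ is again a group homomorphism, hence an MF-representation in the sense of Definition 3 (no faithfulness of this particular map is required), and it satisfies $(\alpha\circ p_i)(x)=\alpha(x_i)\neq 1$. Thus for every nontrivial $x\in H$ we have exhibited an MF-representation that does not kill $x$, which is exactly the assertion that $H$ is residually MF.

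Finally I would invoke Proposition 6 (restated as Corollary 10) to conclude that $H=\bigoplus_j G_j$ is an MF-group, completing the argument.

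There is essentially no hard step once the reduction to residual MF-ness is spotted; the only things to verify are the elementary facts that the $p_i$ are homomorphisms and that the composition of a homomorphism with an MF-representation is again an MF-representation. The one point to stay honest about is the restricted-product hypothesis: the argument inspects only a single coordinate at a time, but it relies on $H$ being countable, so it genuinely needs the direct sum. For contrast, the more hands-on route would construct an almost representation of $H$ by tensoring almost representations $\alpha^{(j)}$ of the factors $G_j$; there the multiplicativity defects simply add, via the bound $\|u\otimes v-u'\otimes v'\|\le\|u-u'\|+\|v-v'\|$ for unitaries, but the genuinely delicate step would be securing the lower bound $\|\alpha(x)-1\|\ge 1$ for $x$ nontrivial in several coordinates, where eigenvalues can cancel in the tensor product (for instance $(-1)\otimes(-1)=1$), so that one would have to repair the estimate using the $\Ad$-trick of Proposition 4 applied to the spectral diameter. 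The residual-MF route sidesteps all of this.
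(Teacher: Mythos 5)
Your proof is correct, but it takes a different route from the paper. The paper argues directly: if $\alpha_j$ are $(\varepsilon,F_j)$-almost representations of the $G_j$, then the block-diagonal sum $\alpha_1\oplus\dots\oplus\alpha_m$ is an $(\varepsilon, F_1\oplus\dots\oplus F_m)$-almost representation of $G_1\oplus\dots\oplus G_m$, and every finite subset of $\bigoplus G_j$ sits inside such a finite sub-product; this verifies the finitary criterion of Proposition 2 with no further machinery. You instead observe that composing a coordinate projection $p_i$ with a faithful MF-representation of $G_i$ yields an MF-representation of $\bigoplus G_j$ separating any prescribed nontrivial element, so $\bigoplus G_j$ is residually MF, and you then invoke Proposition 6. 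Both arguments are sound and there is no circularity, since Proposition 6 precedes this statement in the paper; your reduction is shorter given that proposition, while the paper's is self-contained and makes the quantitative content (the $(\varepsilon,F)$-bookkeeping) explicit. One remark on your closing discussion: the ``hands-on'' alternative you caution against uses tensor products, where eigenvalue cancellation such as $(-1)\otimes(-1)=1$ is a real danger; but the paper's hands-on construction is the \emph{direct sum} (block diagonal), for which $\|(\alpha_1\oplus\dots\oplus\alpha_m)(g)-1\|=\max_i\|\alpha_i(g_i)-1\|$, so no cancellation can occur and no appeal to the $\Ad$-trick of Proposition 4 is needed. Your worry is thus aimed at a construction the paper does not use.
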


\begin{proof}
If $\alpha_j$ are $(\varepsilon, F_j)$-almost representations of group $G_j$ then $\alpha_1\oplus...\oplus \alpha_m$ is $(\varepsilon, F_1\oplus...\oplus F_m)$-almost representation of group $G_1\oplus...\oplus G_m$. It is easy to see that every finite subset of $\bigoplus G_j$ is consisted in some finite direct subproduct.
\end{proof}

\begin{prop}
Let $G$ be MF-group, $F$ - finite group. Then $G\rtimes F$ is MF-group.
\end{prop}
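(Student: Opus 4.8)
The plan is to realize $G\rtimes F$ through the representation induced from the finite-index subgroup $G$, the point being that the finite group and its action by automorphisms can be encoded \emph{exactly}, so that the only asymptotic error comes from the almost multiplicativity of a given asymptotic representation of $G$. Write $\theta\colon F\to\operatorname{Aut}(G)$ for the defining action, so that $(g_1,f_1)(g_2,f_2)=(g_1\,\theta_{f_1}(g_2),f_1f_2)$, and fix a faithful asymptotic representation $\{\alpha_n\colon G\to U_n\}$ of $G$, normalized by Proposition 7 so that $\|\alpha_n(g)-1\|\geq\sqrt2$ for every nontrivial $g$.

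On the space $\bigoplus_{f\in F}\mathbb{C}^n\cong\mathbb{C}^{n|F|}$, with orthonormal block labels $\{e_f\}_{f\in F}$, I would define two pieces. First the permutation unitaries $\Lambda(f_0)$ sending $e_f\mapsto e_{f_0f}$ (and acting as the identity on each $\mathbb{C}^n$), which form a genuine representation of $F$, namely its regular representation. Second the block-diagonal unitaries $D_n(g)=\bigoplus_{f\in F}\alpha_n(\theta_{f^{-1}}(g))$. Then set $\beta_n(g,f_0)=D_n(g)\,\Lambda(f_0)\in U_{n|F|}$.

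The two facts to check are that $\{\beta_n\}$ is an asymptotic homomorphism and that it is faithful in the sense of Proposition 2(2). For multiplicativity the key is the exact commutation identity $\Lambda(f_0)\,D_n(g)\,\Lambda(f_0)^{-1}=D_n(\theta_{f_0}(g))$, which holds on the nose because both $\Lambda$ and the automorphisms $\theta_f$ are honest; this is precisely what forces the twist by $\theta_{f^{-1}}$ in the $f$-block. Using it one computes $\beta_n(g_1,f_1)\beta_n(g_2,f_2)=D_n(g_1)D_n(\theta_{f_1}(g_2))\Lambda(f_1f_2)$, so the whole discrepancy with $\beta_n\big((g_1,f_1)(g_2,f_2)\big)=D_n(g_1\theta_{f_1}(g_2))\Lambda(f_1f_2)$ equals $\|D_n(g_1)D_n(\theta_{f_1}(g_2))-D_n(g_1\theta_{f_1}(g_2))\|$. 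Since each block of $D_n$ is $\alpha_n$ composed with an automorphism, and since $F$ is finite (so the relevant twisted finite subset of $G$ is again finite), this is bounded by a maximum over $f\in F$ of almost-multiplicativity errors of $\alpha_n$ and hence tends to $0$.

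For faithfulness, take $(g,f)\neq(1,1)$. If $f\neq 1$, then $\Lambda(f)$ moves every block label ($ff'\neq f'$), so for any unit vector $e_{f'}\otimes v$ the image $\beta_n(g,f)(e_{f'}\otimes v)$ lies in the orthogonal block $e_{ff'}\otimes\mathbb{C}^n$ and is again a unit vector; hence $\|\beta_n(g,f)(e_{f'}\otimes v)-e_{f'}\otimes v\|=\sqrt2$ and $\|\beta_n(g,f)-1\|\geq\sqrt2$. If $f=1$ and $g\neq1$, then the $f'=1$ block of $D_n(g)=\beta_n(g,1)$ is $\alpha_n(g)$, so $\|\beta_n(g,1)-1\|\geq\|\alpha_n(g)-1\|\geq\sqrt2$ by Proposition 7. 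Thus $\{\beta_n\}$ is a faithful asymptotic representation and $G\rtimes F$ is an MF-group. The only real subtlety I anticipate is the exact conjugation identity together with the bookkeeping that keeps the twisted sets finite; once the blocks are twisted by $\theta_{f^{-1}}$ correctly, everything else is a direct estimate.
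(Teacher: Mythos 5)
Your proof is correct and follows essentially the same route as the paper: both build the induced (regular) representation of $G\rtimes F$ from a normalized faithful asymptotic representation of $G$, using the exact permutation representation of $F$ on blocks twisted by the automorphisms, so that the only error is the almost-multiplicativity of $\alpha_n$ over a finite twisted subset. The only cosmetic difference is in the faithfulness bound for $f\neq 1$, where you get $\sqrt2$ from orthogonality of blocks while the paper gets $1$ from the vanishing diagonal; both suffice.
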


Remark that we can prove only this weak permanent fact about cross product. We do not know answer also in the case of $K=\mathbb{Z}$, while in the case of hyperlinear groups it is true that if $G$ is hyperlinear and $F$ is amenable then $G\rtimes F$ is also hyperlinear.

\begin{proof}
Let $\alpha_n:G\to U_n$ be faithful asymptotic representation and $\gamma_k(g)=k^{-1}gk$ for $k\in F,g\in G$. Put $\beta_n(g)=\oplus_k \alpha_n(k^{-1}gk)\in U_{n|K|}$ for $g\in G$ and let $\beta_n(k)\in U_{n|F|}$ be shift such that $\beta_n(k^{-1})(\oplus_h y_h)\beta_n(k)=\oplus_k y_{kh}$ for every $y_h\in U_n$, i.e. shift $\beta_n(k)$ move "$h$-block" to "$kh$-block". Now define $\beta_n:G\rtimes F\to U_{n|F|}$ on whole $G\rtimes F$ by the formula $\beta_n(gk)=\beta_n(g)\beta_n(k)$ for $k\in F,g\in G$.

Sine $\alpha_n$ is asymptotic representation and $\beta_n(k_1k_2)=\beta_n(k_1)\beta_n(k_2)$ for every $k_1,k_2\in F$ we have $\beta_n(g_1k_1g_2k_2)=\beta_n(g_1\gamma_{k_1^{-1}}(g_2)k_1k_2)=\beta_n(g_1\gamma_{k_1^{-1}}(g_2))\beta_n(k_1)\beta_n(k_2)=_{o(1)}$
$\newline=_{o(1)}\beta_n(g_1) \beta_n(\gamma_{k_1^{-1}}(g_2))\beta_n(k_1)\beta_n(k_2)=\beta_n(g_1)\beta_n(k_1)\beta_n(g_2)\beta_n(k_2)=\beta_n(g_1k_1)\beta_n(g_2k_2)$. So $\beta_n$ is asymptotic representation of group $G\rtimes F$.

Let us deduce faithfulness of $\beta_n$. Consider arbitrary nontrivial $\omega\in G\rtimes F$. It has the following form $\omega=gk$ for some $g\in G,k\in F$. If $k=1$ then $\|\beta_n(\omega)-1\|\geq\|\alpha_n(g)-1\|\geq\sqrt{2}$. Consider case $k\neq 1$. As $\beta_n(k)$ is shift we get that unitary matrix $\beta_n(\omega)$ has only zeros on diagonal . It follows that $\|\beta_n(\omega)-1\|\geq 1$, i.e. $\beta_n$ is faithful asymptotic representation and so $G\rtimes F$ is MF-group.
\end{proof}

\begin{prop}
Let $G_j$ be MF-groups. Then $G=\underset{\longrightarrow}{\lim}G_j$ is also MF-group.
\end{prop}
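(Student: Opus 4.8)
The plan is to verify the finite-dimensional local criterion of Proposition 7 for $G$, reducing it to the same criterion for the groups $G_j$. Write the directed system as $(G_j,\psi_{ij})$ with connecting homomorphisms $\psi_{ij}\colon G_i\to G_j$ for $i\le j$ and canonical maps $\psi_j\colon G_j\to G$; I will use the two basic facts about inductive limits of groups, namely that every element of $G$ is of the form $\psi_j(x)$ for some $j$ and some $x\in G_j$, and that $\psi_j(x)=1$ if and only if $\psi_{jk}(x)=1$ for some $k\ge j$. Fix a finite set $F\subset G$ and $\varepsilon>0$. First I would enlarge $F$ to $\hat F=F\cup\{gh:g,h\in F\}\cup\{1\}$, so that every product we must control already lies in the set on which the almost representation will be defined by lifting. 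By directedness there is an index $j_0$ with $\hat F\subset\psi_{j_0}(G_{j_0})$, and I choose for each $x\in\hat F$ a lift $\tilde x\in G_{j_0}$ with $\psi_{j_0}(\tilde x)=x$.

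The key step is to make the finitely many multiplicative defects vanish at a single finite stage. For $g,h\in F$ the element $\tilde g\,\tilde h\,\widetilde{gh}^{-1}\in G_{j_0}$ maps to $1$ in $G$, hence $\psi_{j_0 k}(\tilde g\,\tilde h\,\widetilde{gh}^{-1})=1$ for some $k=k(g,h)\ge j_0$. Choosing $j\ge j_0$ dominating all the finitely many indices $k(g,h)$ and setting $A_x=\psi_{j_0 j}(\tilde x)$, I obtain the exact relations $A_gA_h=A_{gh}$ in $G_j$ for all $g,h\in F$. Moreover, if $g\neq 1$ in $G$ then $\psi_j(A_g)=g\neq 1$, so $A_g\neq 1$ in $G_j$; thus nontrivial elements of $F$ have nontrivial images.

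Now $G_j$ is an MF-group, so Proposition 7 applied to the finite set $\hat L=\{A_x:x\in\hat F\}\subset G_j$ and to $\varepsilon$ yields a map $\alpha\colon G_j\to U_N$ with $\|\alpha(A_gA_h)-\alpha(A_g)\alpha(A_h)\|<\varepsilon$ for $A_g,A_h\in\hat L$ and $\|\alpha(A_g)-1\|\ge\sqrt2$ whenever $A_g\neq 1$. Define $\beta\colon G\to U_N$ by $\beta(x)=\alpha(A_x)$ for $x\in\hat F$ and arbitrarily (say, by $1$) elsewhere. For $g,h\in F$ we have $gh\in\hat F$, and using $A_gA_h=A_{gh}$ we get $\|\beta(gh)-\beta(g)\beta(h)\|=\|\alpha(A_{gh})-\alpha(A_g)\alpha(A_h)\|<\varepsilon$, while for nontrivial $g\in F$ we get $\|\beta(g)-1\|=\|\alpha(A_g)-1\|\ge\sqrt2$. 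As $F$ and $\varepsilon$ were arbitrary, Proposition 7 shows that $G$ is an MF-group.

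I expect the main obstacle to be exactly the middle step. Because the connecting homomorphisms need not be injective, a product of lifts need not be the lift of the product, so one cannot simply transport an almost representation from a single $G_j$ without distortion. The remedy is the observation that only finitely many defect relations $\tilde g\,\tilde h\,\widetilde{gh}^{-1}$ are involved and that each becomes trivial at some finite stage, so that all of them can be trivialized simultaneously by passing to a sufficiently large $j$; this is what legitimizes the reduction to one group $G_j$. In the special case of injective connecting maps, where $G=\bigcup_j G_j$, this step is vacuous and the argument collapses to a direct transport of the criterion.
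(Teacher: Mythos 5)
Your proof is correct, but it takes a genuinely different route from the paper. The paper's argument is $C^*$-algebraic: it assembles the embeddings $\beta_j\colon G_j\hookrightarrow U(A_j)$ into a single map $\gamma_j(g)=(*,\dots,*,\beta_j(g),\beta_{j+1}(\alpha_j^1(g)),\dots)$ into $\prod A_i/\oplus A_i$, checks compatibility with the connecting maps so that these glue to an injective $\gamma\colon G\to\prod A_i/\oplus A_i$, and then invokes the Blackadar--Kirchberg fact that a separable subalgebra of such a quotient is MF. Notably, the author remarks just before the proof that ``we have only $C^*$-algebraic proof,'' so your elementary, purely local argument is a genuine alternative: by enlarging $F$ to $\hat F=F\cup FF\cup\{1\}$, lifting to some $G_{j_0}$, and pushing forward far enough that the finitely many defect elements $\tilde g\,\tilde h\,\widetilde{gh}^{-1}$ all die, you reduce the finite approximation criterion for $G$ to that for a single $G_j$, with no $C^*$-machinery. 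What the paper's approach buys is brevity (it reuses the MF-algebra permanence result wholesale) and it produces an actual embedding $\gamma$ rather than a family of almost representations; what yours buys is self-containedness and transparency about where directedness is used. One small citation issue: your final map $\beta$ equals $1$ off $\hat F$, so it satisfies the separation bound $\|\beta(g)-1\|\ge\sqrt2$ only for nontrivial $g\in F$, not for all nontrivial $g\in G$ as the ``if'' direction of Proposition 7 literally demands; you should conclude instead via condition 3) of Proposition 3 (with $\delta=1$, say), which requires separation only on the finite set and is exactly what you have verified. This does not affect the substance of the argument.
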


We have only $C^*$-algebraic proof.

\begin{proof}
We have embeddings $\beta_j:G_j\hookrightarrow U(A_j)$ for some MF-algebras $A_j$. Moreover by Proposition 7 we may assume $\|\beta_j(g)-1\|\geq 1$ for every nontrivial $g\in G_j$. Let homomorphisms $\alpha_j^k:G_j\to G_{j+k}$ determine our direct limit. Define $\gamma_j:G_j\to\prod A_i/\oplus A_i$ by formula
$$\gamma_j(g)=(*,...,*,\beta_j(g),\beta_{j+1}(\alpha_j^1(g)),\beta_{j+2}(\alpha_j^2(g)),...)$$
where values of $*$ are not important. Since $\gamma_{j+k}\circ\alpha_j^k=\gamma_j$ we have that homomorphisms $\gamma_j$ define homomorphism $\gamma:\underset{\longrightarrow}{\lim}G_j\to \prod A_i/\oplus A_i$. Consider arbitrary nontrivial $g\in G$. For some $j$ we have $g\in Im G_j$ where $Im G_j$ - image of $G_j$ under natural map $G_j\to G$. It is easy to see that $\|\gamma(g)-1\|=\limsup_k\|\beta_{j+k}(\alpha_j^{k}(g))-1\|\geq 1$ since $g$ is nontrivial. So $\gamma$ is injective. Let $A_G$ be $C^*$-algebra generating by $\gamma(G)$. It is separable and $A_G\hookrightarrow \prod A_i/\oplus A_i$ so it is MF-algebra (see [4]). So $G$ is MF-group.
\end{proof}

\begin{prop}
Let $G,H$ be MF-groups. Then $G\star H$ is also MF-group.
\end{prop}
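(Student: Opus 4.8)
The plan is to reduce the problem, via the amplification trick of Propositions 4 and 5, to a purely finite-dimensional statement about almost representations, and then to build such an almost representation by placing faithful asymptotic representations of $G$ and $H$ in generic relative position.

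First I would record the following reduction. By Propositions 5--7 together with Corollary 9 it is enough to produce, for every finite set $K\subset G\star H$ and every $\varepsilon>0$, an $(\varepsilon,K)$-almost representation $\beta\colon G\star H\to U_N$ with the property that every nontrivial reduced word $w\in K$ has non-concentrated spectrum, i.e. $\operatorname{diam}(\sigma(\beta(w)))\geq\delta$ for a constant $\delta=\delta(K)>0$. Indeed, applying a suitable power $\gamma^{k(w)}$ as in Proposition 4 turns this spectral gap into $\|\gamma^{k(w)}(\beta(w))-1\|\geq\sqrt2$, while $\|\gamma(u)-1\|\leq2\|u-1\|$ keeps almost-multiplicativity under control; taking the direct sum $\bigoplus_{w\in K}\gamma^{k(w)}\circ\beta$ gives an almost representation separating every nontrivial $w\in K$ from $1$ by $\sqrt2$, and exhausting $G\star H=\bigcup_n K_n$ and diagonalising as in the proof of Proposition 6 yields a faithful asymptotic representation.

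Next I would construct $\beta$. Fix faithful asymptotic representations $\alpha_n\colon G\to U_{p_n}$ and $\beta_n\colon H\to U_{q_n}$, upgraded by Proposition 7 so that nontrivial elements are at distance $\geq\sqrt2$ from $1$. Only finitely many syllables $g_1,\dots,g_s\in G$ and $h_1,\dots,h_t\in H$ occur in the reduced expressions of elements of $K$, so for $n$ large the relevant relations of $G$ and $H$ hold to within $\varepsilon$. Pad both representations to a common space and set $\hat\alpha(g)=\alpha_n(g)\otimes1$ and $\hat\beta(h)=U(\beta_n(h)\otimes1)U^{*}$ on $\mathbb C^{N}$, where $U\in U_N$ is a unitary to be chosen. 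Since conjugation preserves the operator norm, $\hat\alpha$ and $\hat\beta$ are almost representations of $G$ and $H$ with the same $\varepsilon$-defects, and by the universal property of the free product they assemble into a map $\beta\colon G\star H\to U_N$ which is automatically an $(\varepsilon,K)$-almost representation.

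The hard part, and the heart of the matter, is to choose $U$ so that every nontrivial reduced word in $K$ is kept away from the scalars. This is exactly where the MF situation differs from the hyperlinear one: we have no tracial state at our disposal, so we cannot invoke asymptotic freeness to force the trace of a reduced word to be small in $\|\cdot\|_2$. Instead I would argue in operator norm directly. Each nontrivial syllable, being at distance $\geq\sqrt2$ from $1$, has a spectral subspace on which it acts by a reflection-like rotation (an eigenvalue in the closed left half-plane); the model case $G=H=\mathbb Z/2$ already shows the mechanism, where the syllables are genuine reflections and an alternating product is a rotation by twice the angle between the reflecting subspaces, hence non-scalar once that angle is generic. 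For general $K$ I expect the set of ``bad'' unitaries $U$ --- those for which some $\beta(w)$, $w\in K$, comes within $\delta$ of a scalar --- to be contained in a proper subvariety (equivalently a small-measure subset) of $U_N$ for $N$ large, so that a generic $U$ simultaneously keeps all finitely many reduced words of $K$ non-concentrated, with a uniform $\delta=\delta(K)>0$. Proving this general-position estimate in the rigid norm topology --- essentially a quantitative ping-pong statement saying that two generically embedded approximate representations generate their approximate free product \emph{in norm} --- is the main obstacle. One could alternatively try the $C^{*}$-algebraic route of realising $G\star H$ in the unitary group of a reduced free product of the MF algebras $C^{*}(\alpha(G))$ and $C^{*}(\beta(H))$, but there the difficulty reappears as the need to know that such a reduced free product is again MF and that the chosen states separate the group elements.
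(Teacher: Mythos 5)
Your proposal has a genuine gap, and you identify it yourself: the ``general-position'' step --- choosing $U$ so that every nontrivial reduced word in $K$ stays uniformly away from the scalars in \emph{operator norm} --- is not an estimate you prove, and it is the entire content of the proposition. Everything before it (the amplification via $\gamma^k$ from Propositions 4--5, padding to a common dimension, assembling $\hat\alpha$ and $U\hat\beta U^*$ into a map on $G\star H$) is routine; the separation of reduced words is the theorem. A quantitative norm-level statement that two generically positioned approximate representations generate an approximately free pair is essentially a strong-asymptotic-freeness result, and even granting it, it would only give you control by the \emph{reduced} free product norm, where faithfulness on the group depends on the choice of states. So the proof as written does not close.

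The paper avoids this entirely by working with the \emph{full} (universal) unital free product of $C^*$-algebras rather than with any concrete relative position of the two representations. Take embeddings $\alpha_G\colon G\hookrightarrow U(A_G)$ and $\alpha_H\colon H\hookrightarrow U(A_H)$ into MF-algebras; the universal property gives $G\star H\to U(A_G\star A_H)$, injectivity is obtained by factoring through the algebraic free product $A_G\star_0 A_H$ and invoking Avitzour's theorem [2] that the algebraic free product embeds into the full $C^*$-free product, and the decisive external input is the theorem of Li and Shen [13] that the unital full free product of MF-algebras is again MF. With the full free product there is no unitary $U$ to choose and no genericity to quantify: the universal norm dominates every representation, so no reduced word can collapse. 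This is the idea your attempt is missing; the difficulty you flag for the reduced free product (is it still MF? do the states separate the group?) is real, which is precisely why the full free product, where [13] applies, is the right object. Note the same open problem you would face persists in the amalgamated setting: the paper points out that $G\star_K H$ over a finite group $K$ is not known to be MF, exactly because the corresponding $C^*$-algebraic free product result is unavailable there.
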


\begin{proof}
Consider injective homomorphisms $\alpha_G:g\hookrightarrow U(A_G)$ and $\alpha_H:H\hookrightarrow U(A_H)$ for some MF-algebras $A_G$ and $A_H$. We have $G\star H\overset{\alpha_G\star \alpha_H}\hookrightarrow A_G\star_0 A_H\overset\gamma\hookrightarrow A_G\star A_H$ where $A\star B$ - unital free product of $C^*$-algebras $A$ and $B$, $A\star_0 B$ - unital free algebraic product (i.e. product without completion) of $C^*$-algebras $A$ and $B$, homomorphism $\alpha_G\star\alpha_H$ is defined via formulas $\alpha_G\star\alpha_H(g)=\alpha_G(g)$ for $g\in G$ and $\alpha_G\star\alpha_H(h)=\alpha_H(h)$ for $h\in H$. Injectivity of $\alpha_G\star\alpha_H$ is obvious, injectivity of $\gamma$ follows from [2]. We know from [13] that unital free product of MF-algebras is MF-algebra, so $A_G\star A_H$ is MF-algebra and $G\star H$ is MF-group.
\end{proof}

It is known that if $G,H$ is hyperlinear groups and $K$ is amenable then $G\underset{K}{\star}H$ is hyperlinear group. But we do not know is it true that $G\underset{K}{\star}H$ is MF-group when $G,H$ is MF-groups and $K$ is finite group.

\begin{prop}
Let $\varphi_t:G\to U(B(\mathbb{H}))$ - pointwise continuous family of homomorphisms (i.e. for every $g\in G$ function $\varphi_t(g)$ is continuous) where $t\in [0,\infty)$ and $B(\mathbb{H})$ - algebra of bounded operators on some Hilbert space. Let $C\subset B(\mathbb{H})$ be some quasidiagonal algebra and

1) $\varphi_0$ is injective.

2) For every $g\in G$ there is some $c_g\in C$ such that $\varphi_t(g)\to c_g$ as $t\to\infty$.

Then $G$ is MF-group.
\end{prop}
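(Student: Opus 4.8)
The plan is to reduce to a $C^*$-algebraic statement and then build finite-dimensional approximations along the path. First I would pass from the group to its full $C^*$-algebra: each $\varphi_t$ extends to a unital $*$-homomorphism $\phi_t\colon C^*(G)\to B(\mathbb H)$, and pointwise continuity together with $\|\phi_t\|\le 1$ upgrades to pointwise-norm continuity on all of $C^*(G)$. Hypothesis 2) then gives, for every $a\in C^*(G)$, a norm limit $\psi(a)=\lim_{t\to\infty}\phi_t(a)\in C$, and joint continuity of multiplication makes $\psi\colon C^*(G)\to C$ a $*$-homomorphism. Assembling everything, I set $\mathcal A=\{f\in C([0,\infty],B(\mathbb H)):f(\infty)\in C\}$ and $\Phi(a)(t)=\phi_t(a)$ with $\Phi(a)(\infty)=\psi(a)$, a unital $*$-homomorphism $C^*(G)\to\mathcal A$. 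The point of hypothesis 1) is that $\phi_0$ is isometric, so $\|\Phi(a)\|=\sup_t\|\phi_t(a)\|=\|\phi_0(a)\|=\|a\|$; thus $\Phi$ is faithful and, as $G$ is countable, $B:=\Phi(C^*(G))$ is a separable subalgebra of $\mathcal A$.

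Since $G\hookrightarrow U(C^*(G))$ canonically, it suffices to show $C^*(G)$ is MF, i.e. that $B$ is MF; then $C^*(G)\hookrightarrow B\hookrightarrow \prod M_n/\oplus M_n$ faithfully and, restricting to unitaries, $G$ becomes MF. To see $B$ is MF I would construct a faithful MF-representation directly, i.e. maps $\mu_k\colon B\to M_{N_k}$ that are asymptotically multiplicative and asymptotically isometric. The two ingredients are quasidiagonality of $C$ at the endpoint $t=\infty$ and the faithfulness concentrated at $t=0$. Fixing a finite set and a tolerance, I would choose times $t_k\to\infty$ and finite-rank projections $P_k\nearrow 1$ coming from the quasidiagonality of $C$, arranged so that $\|[P_k,\psi(a)]\|\to0$ and, after picking $t_k$ large, $\|\phi_{t_k}(a)-\psi(a)\|\to0$, whence $\|[P_k,\phi_{t_k}(a)]\|\to0$; the compressions $a\mapsto P_k\phi_{t_k}(a)P_k$ are then asymptotically multiplicative. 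The trouble is that these endpoint compressions recover only $\|\psi(a)\|$, not $\|a\|=\sup_t\|\phi_t(a)\|$, so they need not be faithful. To capture the $t=0$ value I would thread the whole path into the matrices by a Berg-type gluing: over a fine partition $0=s_0<\dots<s_m=\infty$ (fine enough, by uniform continuity on the compact parameter interval, that consecutive $\phi_{s_j}(a)$ differ by less than the tolerance) I build a single block operator on $\bigoplus_j P_k\mathbb H$ using the values $\phi_{s_j}(a)$ and near-commuting connecting projections, so that the compression stays approximately multiplicative across blocks while the $s_0=0$ block injects $\phi_0(a)$ and hence its norm.

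The main obstacle is exactly this reconciliation of the two ends. One cannot argue abstractly from permanence, since $B$ sits in the extension $0\to C_0([0,\infty),B(\mathbb H))\to B\to \psi(C^*(G))\to0$ whose ideal is a quasidiagonal (hence MF) cone and whose quotient is a subalgebra of the quasidiagonal $C$, yet MF is not closed under extensions (the Toeplitz algebra is an extension of $C(\mathbb T)$ by the compacts and fails to be MF). Thus the continuous path cannot be discarded: faithfulness lives at $t=0$, where there is no quasidiagonality, so the only way to make a finite-dimensional compression simultaneously approximately multiplicative and norm-detecting on the $t=0$ fibre is to let the quasidiagonal structure at $t=\infty$ propagate backwards along the path through near-commuting projections. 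Carrying out the commutator estimates for this gluing, uniformly along the partition, is the technical heart; once it is in place the resulting $\mu_k$ furnish a faithful MF-representation of $B$, and the reductions of the first two paragraphs finish the proof.
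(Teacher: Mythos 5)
Your setup---passing to the mapping-cylinder algebra $\mathcal A=\{f\in C([0,\infty],B(\mathbb H)):f(\infty)\in C\}$ and correctly observing that no abstract extension-permanence argument can work---matches the paper's construction of $\Omega=\{f\in C_b([0,\infty),A):f(\infty)\in C\}$. But there are two problems. The minor one: injectivity of $\varphi_0$ on $G$ does not make $\phi_0$ isometric on $C^*(G)$ (take $G=\mathbb F_2$ and the left regular representation: it is injective on the group, yet the induced map on $C^*(\mathbb F_2)$ has a large kernel), so your identity $\|\Phi(a)\|=\|\phi_0(a)\|=\|a\|$ fails. This is harmless for the conclusion, because all that is needed is that $g\mapsto\Phi(u_g)$ is injective into $U(B)$ for the separable algebra $B=C^*(\Phi(G))$, which does follow from injectivity of $\varphi_0$; one then applies condition 6) of Proposition 2 to $B$ rather than to $C^*(G)$.

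The major one: the entire content of the proposition is concentrated in the step you defer. Making a single finite-dimensional compression simultaneously approximately multiplicative and norm-detecting at $t=0$, where there is no quasidiagonality, is exactly the assertion that quasidiagonality is a homotopy invariant (Voiculescu), and its proof is not a routine Berg gluing: projections chosen to nearly commute with $\phi_{t_k}(a)$ for large $t_k$ give no control over the commutators with $\phi_0(a)$, and ``propagating the quasidiagonal structure backwards along the path'' is precisely what Voiculescu's absorption argument accomplishes; it cannot be obtained just from fineness of the partition and near-equality of consecutive values $\phi_{s_j}(a)$. The paper sidesteps all of this by noting that $\Omega$ is homotopy equivalent to $C$ (via $\alpha(c)(t)=c$ and $\beta(f)=f(\infty)$) and citing homotopy invariance of quasidiagonality ([6], Theorem 7.3.6), after which $C^*(\varphi(G))$ is separable and quasidiagonal, hence MF. As written, your proposal asserts the key estimate rather than proving it; either carry out Voiculescu's argument in full or invoke the homotopy-invariance theorem, at which point the hands-on gluing becomes unnecessary.
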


\begin{proof}
Let $Q$ be countable dense subset of $[0,\infty)$ and $A$ be separable $C^*$-algebra generated by $\{\varphi_q(g)\}_{q\in Q,g\in G}$. Since $\varphi_t$ is pointwise continuous then $\varphi_t(g)\in A$ fot every $t\in [0,\infty)$. Consider $C^*$-algebra $\Omega=\{f\in C_b([0,\infty),A):f(\infty)\in C\}$ of continuous bounded $A$-valued function which tend to some element of $C$ at infinity. Remark that $C\subset A$. Define homomorphism $\varphi:G\to U(\Omega)$ via formula $\varphi(g)(t)=\varphi_t(g)$. This homomorphism is injective since $\varphi_0$ is injective. It is easy to see that $\Omega$ is homotopic to $C$, the construction of homotopy equivalence is following: $\alpha:C\to \Omega$ via $\alpha(c)(t)=c$ and $\beta:\Omega\to C$ via $\beta(f)=f(\infty)$. Obviously $\alpha\circ\beta\simeq\id_\Omega$ and $\beta\circ\alpha\simeq\id_C$. Since quasidiagonality is homotopy invariant (see [6], Theorem 7.3.6), algebra $\Omega$ is quasidiagonal, so algebra which is generated by $\varphi (G)$ is also quasidiagonal as subalgebra of quasidiagonal $\Omega$. Since separable quasidiagonal algebras are MF-algebras, we deduce that $G$ is MF-group.
\end{proof}

\section{Baumslag group}

In this section we prove that Baumslag group is MF-group. Its hyperlinearity follows from [10] (see also [4],[15]), where it is also proved that soficity is closed under extension by amenable groups (it is well known that sofic groups are hyperlinear).

We will use following notation:

$x^y=y^{-1}xy=\Ad_y x$

$B=\langle a,b|a^{a^b}=a^2\rangle$ - Baumslag group

$H=\langle a,b|a^b=a^2\rangle$

$H_j=\langle a_{-j},...,a_j|a_i^{a_{i+1}}=a_i^2,i=-j,...,j-1\rangle=H\underset{\mathbb{Z}}{\star}...\underset{\mathbb{Z}}{\star}H$ where multiplication factors of amalgamated product are numbered from $-j$ to $j-1$, $i$-th factor generated by $a_i$ and $a_{i+1}$ and the generator of common subgroup $\mathbb{Z}$ of $i$-th and $(i+1)$-th factors is $a_{i+1}$.

$H_\infty=\langle...a_{-j},...,a_j,....|a_i^{a_{i+a}}=a_i^2\rangle=\underset{\longrightarrow}{\lim}H_j$

$D_n=diag\{1,e^{\frac{2\pi}{n}},...,e^{\frac{2\pi (n-1)}{n}}\}$

$T_n$ - standard shift matrix in $M_n(\mathbb{C})$ i.e. $T_n=e_{1,2}+e_{2,3}+...+e_{(n-1),n}+e_{n,1}$ where $e_{i,j}$ - standard matrix units.

$U_n=U(M_n(\mathbb{C}))$

$max(K)$ for finite subset $K\subset H_\infty$ is minimal $j$ such that $K\subset H_j$. It easy to see that $max(K)$ also equals to maximal absolute values of such $j$ for which letter $a_j$ nonreduceable appears in words in $K$.

$x\sim y$ if $x$ and $y$ are unitary conjugate.

$x=_\varepsilon y$ if $\|x-y\|<\varepsilon$.

We will write $x\sim_\varepsilon y$ for unitary $x,y\in U_n$ if there is unitary $y^\prime\in U_n$ such that $\|y-y^\prime\|<\varepsilon$ and $x\sim y^\prime$. It means that after small perturbation $y$ become unitary conjugate to $x$.

\begin{prop}
There is isomorphism $B\simeq H_\infty\rtimes \mathbb{Z}$ where action of generator of $\mathbb{Z}$ on $H_\infty$ is defined via formula $a_i\mapsto a_{i+1}$.
\end{prop}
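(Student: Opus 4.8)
The plan is to exhibit an explicit isomorphism $B \simeq H_\infty \rtimes \mathbb{Z}$ by constructing homomorphisms in both directions and checking they are mutually inverse. The key observation is that $B = \langle a,b \mid a^{a^b}=a^2\rangle$ should be presented with generators that make the semidirect product structure transparent. First I would set $a_i := a^{b^i}$ for each $i \in \mathbb{Z}$, i.e. $a_i = b^{-i} a b^{i}$, so that $b$ acts by conjugation as the shift $a_i \mapsto a_{i+1}$. The single relator $a^{a^b}=a^2$ reads $a^{a_1} = a^2$, that is, $a_0^{a_1} = a_0^2$; conjugating this relation by $b^i$ gives $a_i^{a_{i+1}} = a_i^2$ for every $i$. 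These are precisely the defining relations of $H_\infty = \varinjlim H_j$.

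The main work is to show that $B$ is generated by the $a_i$ together with $b$, subject only to the relations $a_i^{a_{i+1}}=a_i^2$ and the shift action $b^{-1}a_i b = a_{i+1}$, with no further collapsing. Concretely, I would define a map $\Phi : H_\infty \rtimes \mathbb{Z} \to B$ by sending the generator $a_i \in H_\infty$ to $a^{b^i} \in B$ and the generator of $\mathbb{Z}$ to $b$; the relations $a_i^{a_{i+1}}=a_i^2$ hold in $B$ by the computation above, and the semidirect-product relation $t^{-1} a_i t = a_{i+1}$ (where $t$ generates $\mathbb{Z}$) is satisfied since $b^{-1}(a^{b^i})b = a^{b^{i+1}}$. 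Thus $\Phi$ is a well-defined homomorphism, and it is clearly surjective because $a = a_0$ and $b$ are in its image. In the other direction I would define $\Psi : B \to H_\infty \rtimes \mathbb{Z}$ by $a \mapsto a_0$ and $b \mapsto t$; to see that $\Psi$ is well-defined one checks that the single Baumslag relation $a^{a^b}=a^2$ is respected, which amounts to verifying $a_0^{(t^{-1}a_0 t)} = a_0^2$, and since $t^{-1}a_0 t = a_1$ this is exactly the relation $a_0^{a_1}=a_0^2$ holding in $H_\infty$.

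Finally I would verify that $\Phi$ and $\Psi$ are mutually inverse by checking them on generators: $\Psi(\Phi(a_i)) = \Psi(a^{b^i}) = t^{-i} a_0 t^{i} = a_i$ and $\Psi(\Phi(t)) = \Psi(b) = t$, while $\Phi(\Psi(a)) = \Phi(a_0) = a$ and $\Phi(\Psi(b)) = \Phi(t) = b$. Since both composites fix a generating set, they are the identity, establishing the isomorphism.

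I expect the only genuine obstacle to be confirming that the presentation of $B$ on the generators $\{a_i\}_{i\in\mathbb{Z}} \cup \{b\}$ introduces no relations beyond those defining $H_\infty \rtimes \mathbb{Z}$; this is a Tietze-transformation argument showing that adding the redundant generators $a_i = a^{b^i}$ and rewriting the lone relator produces exactly the claimed presentation. Everything else is a routine check on generators.
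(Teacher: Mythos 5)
Your proposal is correct; the paper leaves this as an ``easy exercise,'' and your argument (introduce $a_i=a^{b^i}$, check the relations in both directions, and verify the two maps are mutually inverse on generators) is exactly the intended standard proof. Note that the ``genuine obstacle'' you flag at the end is already dispatched by your own mutual-inverse check: since $\Psi\circ\Phi$ fixes a generating set of $H_\infty\rtimes\mathbb{Z}$, the map $\Phi$ is injective and no separate Tietze/Reidemeister--Schreier argument about absence of extra relations is needed.
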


\begin{proof}
The proof is easy exercise.
\end{proof}

\begin{lem}
Consider automorphism $\varphi$ of group $G$, homomorphism $\alpha: G\rtimes_\varphi\mathbb{Z}\to F$ where $F$ - some group (possibly non-countable). Let for every nonzero $k\in\mathbb{Z}$ automorphism $\varphi^k$ be non-inner. We have that if $\alpha|_G$ is faithful then $\alpha$ is also faithful.
\end{lem}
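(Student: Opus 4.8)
The plan is to analyze $\Ker\alpha$ directly, using the normal form for elements of the semidirect product. Write $t$ for the canonical generator of the $\mathbb{Z}$-factor, so that every element of $G\rtimes_\varphi\mathbb{Z}$ is uniquely of the form $gt^k$ with $g\in G$, $k\in\mathbb{Z}$, and so that conjugation by $t$ realizes $\varphi$ on $G$, i.e. $t^k h t^{-k}=\varphi^k(h)$ for all $h\in G$. I would take an arbitrary element $gt^k\in\Ker\alpha$ and aim to conclude that it is trivial, that is, $g=1$ and $k=0$.

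First I would dispose of the case $k=0$: here $\alpha(g)=1$, and since $\alpha|_G$ is faithful this gives $g=1$, so the element is already trivial. The whole content of the lemma lies in ruling out $k\neq 0$. So suppose $k\neq 0$ and $\alpha(g)\alpha(t)^k=\alpha(gt^k)=1$. Then $\alpha(t)^k=\alpha(g)^{-1}=\alpha(g^{-1})$ lies in $\alpha(G)$, and this is the observation that drives everything.

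The key step is to feed this back into the conjugation relation. For every $h\in G$ I compute
$$\alpha(\varphi^k(h))=\alpha(t^k h t^{-k})=\alpha(t)^k\,\alpha(h)\,\alpha(t)^{-k}=\alpha(g^{-1})\,\alpha(h)\,\alpha(g^{-1})^{-1}=\alpha(g^{-1}hg).$$
Since $\alpha|_G$ is faithful, this forces $\varphi^k(h)=g^{-1}hg$ for every $h\in G$, i.e. $\varphi^k$ is exactly the inner automorphism given by conjugation by $g^{-1}$. As $k\neq 0$, this contradicts the hypothesis that $\varphi^k$ is non-inner; hence no element $gt^k$ with $k\neq 0$ can lie in the kernel, and $\Ker\alpha$ is trivial.

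I do not expect a serious obstacle: the argument is a short chase through the normal form, and the only genuine idea is that a kernel element $gt^k$ with $k\neq 0$ places $\alpha(t)^k$ inside $\alpha(G)$, which—because $\alpha|_G$ is injective—transports the abstract relation $t^k h t^{-k}=\varphi^k(h)$ into an honest equality of automorphisms of $G$ and thereby exhibits $\varphi^k$ as inner. The only point to keep straight is the bookkeeping of the semidirect-product convention (whether $\varphi$ is implemented by conjugation by $t$ or by $t^{-1}$); this merely reverses a sign in $k$ and does not affect the structure of the proof.
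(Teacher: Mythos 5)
Your proposal is correct and is essentially the same argument as the paper's: both take a kernel element $gt^k$, observe that $k\neq 0$ would place $\alpha(t)^k$ in $\alpha(G)$, and use faithfulness of $\alpha|_G$ to conclude that $\varphi^k$ is inner, a contradiction. The only difference is the (immaterial) choice of conjugation convention, which swaps $ghg^{-1}$ for $g^{-1}hg$.
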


\begin{proof}
Assume that $\alpha$ is not faithful. So we can find $g\in G$ and $z\in\mathbb{Z}$ for which $\alpha(gz)=1$ If $z=0$ than this contradict with faithfulness of $\alpha|_G$ (by $0$ we denote neutral element of $\mathbb{Z}$). So $z\neq 0$ and $\alpha(z)=\alpha(g^{-1})$. It means that for every $h\in G$ we have $\alpha(\varphi^z(h))=\alpha(\varphi\circ...\circ\varphi(h))=\alpha(z^{-1}hz)=\alpha(ghg^{-1})$. Since $\alpha|_G$ is faithful then $\varphi^z(h)=ghg^{-1}$, i.e. automorphism $\varphi^z$ is inner which is contradiction.
\end{proof}

\begin{prop}
All powers of automorphism $\varphi:H_\infty\to H_\infty$, $\varphi(a_j)=a_{j+1}$ are non-inner.
\end{prop}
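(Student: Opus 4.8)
The plan is to argue by contradiction, extracting the contradiction from the abelianization of a \emph{finite} stage $H_M$ rather than from $H_\infty$ itself. First I would reduce to the case $k>0$: since the inverse of an inner automorphism is inner, $\varphi^k$ is inner if and only if $\varphi^{-k}$ is, so it suffices to rule out positive powers. Assume then, for contradiction, that $\varphi^k$ is the inner automorphism $h\mapsto w^{-1}hw$ for some $w\in H_\infty$. Evaluating on generators gives $w^{-1}a_jw=a_{j+k}$ for every $j\in\mathbb{Z}$. Because $w$ is a single element of the direct limit $H_\infty=\underset{\longrightarrow}{\lim}H_j$, it already lies in some finite stage $H_N$, hence is a word in the generators $a_{-N},\dots,a_N$ only.

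The key computation is the abelianization of the finite stage. For $M>N$ I would compute $H_M^{\mathrm{ab}}$: abelianizing each defining relation $a_i^{a_{i+1}}=a_i^2$ turns conjugation trivial and yields $a_i=2a_i$, i.e. the class of $a_i$ vanishes, for every index $i$ occurring as a base, that is for $-M\le i\le M-1$. The single generator never appearing as a base is the top one $a_M$, so $H_M^{\mathrm{ab}}\cong\mathbb{Z}$, generated by the class of $a_M$, and every $a_i$ with $i<M$ maps to $0$. Write $\pi\colon H_M\to\mathbb{Z}$ for this map.

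Now choose $M$ larger than both $N$ and $k$, and set $j=M-k$, so that $j<M$ and $j+k=M$; the relation above reads $w^{-1}a_{M-k}w=a_M$ in $H_\infty$. Since the canonical inclusion $H_M\hookrightarrow H_\infty$ is injective (each $H_j\hookrightarrow H_{j+1}$ identifies $H_j$ with the subgroup generated by a contiguous block of the amalgamation factors, hence a genuine subgroup, and a direct limit of injections is injective), this identity already holds in $H_M$, where both sides are defined. Applying $\pi$ and using that $\mathbb{Z}$ is abelian, the left side maps to $-\pi(w)+\pi(a_{M-k})+\pi(w)=\pi(a_{M-k})=0$ (indeed $\pi(w)=0$ as well, since $w\in H_N$ involves only generators of index $<M$), while the right side maps to $\pi(a_M)=1$. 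This forces $0=1$, the desired contradiction, so $\varphi^k$ is non-inner.

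I expect the one genuinely non-formal point to be the structure of the finite-stage abelianization: the observation that the relations collapse every generator except the one of largest index, so that $H_M^{\mathrm{ab}}$ is an honest $\mathbb{Z}$ detecting precisely $a_M$. Everything else (the reduction to $k>0$ and the descent of the identity to a finite stage via injectivity of the amalgam inclusions) is routine. It is worth noting \emph{why} a one-step global argument cannot work: $H_\infty$ is perfect, because the transition maps on abelianizations send $a_j\mapsto 0$ and hence $\underset{\longrightarrow}{\lim}H_j^{\mathrm{ab}}=0$. The contradiction therefore has to be produced at a finite stage, where the shift $\varphi^k$ carries a generator invisible to $\pi$ onto the unique generator that $\pi$ does see.
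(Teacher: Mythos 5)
Your proof is correct, and it takes a genuinely different route from the paper's. The paper argues purely by ``support'': setting $N=\max(\{\omega\})$ for the alleged conjugating element $\omega$, it observes that $a_{N+k}=\omega^{-1}a_N\omega$ would lie in the subgroup $H_N\le H_\infty$, whereas $a_{N+k}\notin H_N$ --- a one-line contradiction, but one whose key non-membership claim silently rests on the normal form theory of iterated amalgamated products (and, as written, needs the generator $a_{-N}$ rather than $a_N$ when $k<0$). You instead reduce to $k>0$, descend to a finite stage $H_M$ with $M>\max(N,k)$, and compute $H_M^{\mathrm{ab}}\cong\mathbb{Z}$: each relation $a_i^{a_{i+1}}=a_i^2$ kills the class of $a_i$ for $i<M$, so the abelianization detects exactly the top generator $a_M$, and applying it to $w^{-1}a_{M-k}w=a_M$ gives $0=1$. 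What your approach buys is that the delicate structural fact is replaced by an explicit homomorphism computation; the only amalgamated-product input you need is injectivity of $H_M\hookrightarrow H_\infty$, which is the standard subgroup theorem (plus the inductive check that $\langle a_j\rangle$ is infinite cyclic in $H_j$ so the amalgams are legitimate). Your closing remark that $H_\infty$ itself is perfect, so the detection must happen at a finite stage, is a worthwhile observation absent from the paper; in fact your abelianization map $\pi:H_{N+k}\to\mathbb{Z}$ also furnishes a clean proof of the paper's unproved claim that $a_{N+k}\notin H_N$.
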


\begin{proof}
Assume that for some nonzero $k\in\mathbb{Z}$ and $\omega\in H_\infty$ we have $\varphi^k(h)=\omega^{-1}h\omega$ for every $h\in H_\infty$. Put $N=max(\{\omega\})$. So $a_{N+k}=\varphi^k(a_N)=\omega^{-1}a_N\omega\in H_N$ since $\omega\in H_N$. But $a_{N+k}\notin H_N$. Contradiction.
\end{proof}

We need the following theorem from [3]:

\begin{thm}
Let $A,B$ be residually solvable groups, $D$ be solvable. Consider common subgroup $C\subset A,B$. If there is homomorphism $\beta:A\underset{C}{\star} B\to D$ such that $\beta|_C$ is faithful. Then $A\underset{C}{\star} B$ is residually solvable.
\end{thm}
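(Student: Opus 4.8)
The plan is to study the normal subgroup $K=\ker\beta$ of $G=A\underset{C}{\star}B$ and then transfer its residual solvability to $G$. First I would record two formal reductions. Since $\beta(G)\leq D$ is solvable, $G/K$ is solvable; and since the terms $K^{(n)}$ of the derived series are characteristic in $K$ while $K\trianglelefteq G$, each $K^{(n)}$ is normal in $G$. For every $n$ the quotient $G/K^{(n)}$ is then an extension of the solvable group $K/K^{(n)}$ by the solvable group $G/K$, hence solvable. Consequently, once I know that $\bigcap_n K^{(n)}=1$, i.e. that $K$ is residually solvable, every nontrivial $g\in G$ is separated from $1$ by a solvable quotient: by $\beta$ itself when $g\notin K$, and by some $G/K^{(n)}$ when $1\neq g\in K$. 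So the theorem reduces entirely to proving that $K$ is residually solvable.

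To analyze $K$ I would exploit the action of $G$ on the Bass--Serre tree $T$ of the amalgam, whose edge stabilizers are the conjugates of $C$ and whose vertex stabilizers are the conjugates of $A$ and of $B$. This is exactly where the hypothesis on $\beta$ is used: if $k=gcg^{-1}\in K\cap gCg^{-1}$ then $1=\beta(k)=\beta(g)\beta(c)\beta(g)^{-1}$, so $\beta(c)=1$, whence $c=1$ because $\beta|_C$ is faithful, and therefore $k=1$. Thus $K$ acts on $T$ with trivial edge stabilizers, and by Bass--Serre theory $K$ is the fundamental group of a graph of groups with trivial edge groups; that is, $K$ is a free product of its vertex stabilizers $K_v$ (each of the form $K\cap gAg^{-1}$ or $K\cap gBg^{-1}$) together with a free group $F$. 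Each $K_v$ is a subgroup of a conjugate of $A$ or of $B$, hence residually solvable, and $F$ is free, hence residually solvable; so $K$ is a free product of residually solvable groups.

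The last and hardest step is to conclude that such a free product is itself residually solvable. This is precisely the statement that residual solvability is stable under free products, which I would invoke from Gruenberg's root-class theory: the class of solvable groups is closed under subgroups and extensions and forms a root class, and for a root class $\mathcal{X}$ any free product of residually-$\mathcal{X}$ groups is residually-$\mathcal{X}$. I expect this to be the genuine obstacle, since the reductions above are purely formal, whereas separating an arbitrary reduced word of $K$ by a solvable quotient requires this nontrivial free-product machinery (note by contrast that the analogous statement fails for residual nilpotence, which is not a root class). Granting this input, $K$ is residually solvable, and the extension argument of the first paragraph completes the proof that $A\underset{C}{\star}B$ is residually solvable.
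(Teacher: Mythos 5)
Your proof is correct, but note that the paper itself gives no proof of this statement: it is imported verbatim from Azarov--Tieudjo [3] (``On Root-Class Residuality of Amalgamated Free Products''), so there is no internal argument to compare against. What you write is the standard root-class proof of exactly that result, specialized to the class of solvable groups: the reduction to residual solvability of $K=\ker\beta$ via the derived series (each $K^{(n)}$ characteristic in $K$, hence normal in $G$, with $G/K^{(n)}$ solvable-by-solvable), the observation that faithfulness of $\beta|_C$ forces $K$ to meet every conjugate of $C$ trivially so that $K$ splits as a free product of subgroups of conjugates of $A$ and $B$ with a free factor, and Gruenberg's theorem on free products of residually-$\mathcal{X}$ groups for a root class $\mathcal{X}$ all check out, and you correctly identify the last of these as the only genuinely nontrivial external input.
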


As easy corollary we can deduce the following proposition:

\begin{prop}
For every $j$ group $H_j$ is residually solvable (so it is MF-group).
\end{prop}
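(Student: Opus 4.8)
The plan is to prove the slightly more symmetric statement that, for every $n$, the $n$-fold chain amalgam
$P_n=\langle c_0,\dots,c_n\mid c_i^{c_{i+1}}=c_i^2,\ i=0,\dots,n-1\rangle=\underbrace{H\underset{\mathbb Z}{\star}\cdots\underset{\mathbb Z}{\star}H}_{n}$
is residually solvable, and to obtain the proposition by relabelling $a_{-j},\dots,a_j$ as $c_0,\dots,c_{2j}$, which identifies $H_j$ with $P_{2j}$ (for $j=0$ we simply have $H_0=\mathbb Z$). I would argue by induction on $n$. The base case $n=1$ is immediate: $H=\langle a,b\mid a^b=a^2\rangle$ is the metabelian group $\mathbb Z[1/2]\rtimes\mathbb Z$, hence solvable and in particular residually solvable.

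For the inductive step I would split off the last factor, writing $P_n=P_{n-1}\underset{\langle c_{n-1}\rangle}{\star}H$, where $A=P_{n-1}$ is residually solvable by the induction hypothesis, $B=H=\langle c_{n-1},c_n\mid c_{n-1}^{c_n}=c_{n-1}^2\rangle$ is the final copy, and the amalgamating subgroup $C=\langle c_{n-1}\rangle\cong\mathbb Z$ is the common infinite cyclic subgroup built into the chain. To apply Theorem 19 it then suffices to exhibit a solvable group $D$ together with a homomorphism $\beta\colon P_n\to D$ for which $\beta|_C$ is faithful, i.e. for which $\beta(c_{n-1})$ has infinite order.

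Producing this $\beta$ is the heart of the matter, and it is the step requiring a little care. One cannot use an abelian quotient: each relation abelianizes to $c_i=c_i^2$, so the abelianization of $P_n$ kills every $c_i$ with $i<n$ and never detects $c_{n-1}$; a genuinely nonabelian, though still solvable, target is forced. I would take $D=H$ itself and set $\beta(c_i)=1$ for $0\le i\le n-2$, $\beta(c_{n-1})=a$, and $\beta(c_n)=b$. This respects all relations: for $i\le n-2$ the base $c_i$ is sent to $1$, so $\beta(c_i)^{\beta(c_{i+1})}=1=\beta(c_i)^2$ no matter what the conjugator is, while the final relation becomes exactly $a^b=a^2$, which holds in $H$. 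Since $a$ has infinite order in $H$, the restriction $\beta|_C$ is injective, so Theorem 19 applies and $P_n$ is residually solvable, completing the induction.

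Finally, residual solvability yields the MF-property, as claimed in parentheses: solvable groups are amenable, so $H_j$ is residually amenable, hence residually MF by Theorem 9 (amenable groups are MF), and therefore MF by Corollary 10 (residually MF groups are MF).
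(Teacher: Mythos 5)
Your proposal is correct and follows essentially the same route as the paper: induction on the number of amalgamated factors, peeling off the last copy of $H$ and applying Theorem 19 with the same homomorphism $\beta$ (killing all generators except the last two, which go to $a$ and $b$ in the solvable target $H=BS(1,2)$). The extra details you supply (why an abelian quotient cannot work, the explicit verification of the relations, and the infinite order of $a$) are correct and merely flesh out what the paper leaves as "easy".
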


\begin{proof}
Let us use induction to prove this fact.

Base case: $H$ is solvable and so residually solvable.

Inductive step: Group $H_j$ has $2j+1$ amalgamated  multiplication factors. Put $L_N=\langle a_{-j},...,a_N|a_i^{a_{i+1}}=a_i^2\rangle$. Trivially we have $L_{-j+1}=H$ and $L_j=H_j$. To prove proposition it is enough to show that if $L_N$ is residually solvable then
$L_{N+1}=L_N\underset{\mathbb{Z}}{\star} H$ is also residually solvable. By Theorem 19 it is enough to construct homomorphism $\beta: L_{N+1}\to H$ which is injective on common subgroup $\mathbb{Z}$ of $L_N$ and $H$. Define $\beta$ on generators in such way: $\beta(a_i)=1$ for $i<N$, $\beta(a_N)=a$, $\beta(a_{N+1})=b$. It is easy that this map extends to homomorphism, which satisfy necessary conditions.
\end{proof}

Remark, that neither $B$ nor $H_\infty$ can be residually solvable because residual solvability is closed under taking extensions by solvable groups (see [11]).

Let $f(N)=2^{p_N}-1$ where $p_N$ is $N$-th prime number.

\begin{prop}
There exists matrix $T$ such that $T^{-1}D_{f(N)}T=D^2_{f(N)}$ and \newline $T\sim 1\oplus D_{p_N}\oplus...\oplus D_{p_N}$. In other words spectrum of $T$ is set of all $p_N$-th roots of unity with the same multiplicity and additional $1$ with multiplicity $1$.
\end{prop}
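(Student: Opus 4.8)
The plan is to realise $T$ as the permutation matrix of the doubling map on residues modulo $n$. Write $n = f(N) = 2^{p_N}-1$, abbreviate $p = p_N$, and set $\omega = e^{2\pi i/n}$, so that $D_n = \operatorname{diag}(\omega^0,\omega^1,\dots,\omega^{n-1})$ and $D_n^2 = \operatorname{diag}(\omega^{2k})_{0\le k<n}$. Since $n$ is odd, the map $\sigma\colon k \mapsto 2k \bmod n$ is a bijection of $\{0,1,\dots,n-1\}$, so I may define $T$ by $Te_k = e_{\sigma(k)}$ on the standard basis vectors $e_0,\dots,e_{n-1}$. A direct computation then gives $T^{-1}D_n T = \operatorname{diag}(\omega^{\sigma(k)})_k = \operatorname{diag}(\omega^{2k})_k = D_n^2$, which settles the first assertion; it remains only to compute the spectrum of $T$.

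For this I would use the standard fact that the spectrum of a permutation matrix is the union, over the cycles of the permutation, of the complete set of $\ell$-th roots of unity coming from each cycle of length $\ell$, and that the permutation matrix of a single $\ell$-cycle is unitarily conjugate to the shift $T_\ell$, hence to $D_\ell$. So everything reduces to determining the cycle type of $\sigma$. The element $0$ is its unique fixed point and contributes the eigenvalue $1$; for $k\neq 0$, the orbit length of $k$ equals the multiplicative order of $2$ modulo $n/\gcd(k,n)$.

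The crux of the proof is the arithmetic claim that every divisor $e>1$ of $n = 2^{p}-1$ satisfies $\operatorname{ord}_e(2) = p$. Indeed $e \mid 2^{p}-1$ forces $\operatorname{ord}_e(2) \mid p$, and since $p$ is prime this order is $1$ or $p$; it cannot be $1$, for that would give $e \mid 2-1$. Taking $e = n/\gcd(k,n)$, which is a divisor of $n$ strictly larger than $1$ whenever $0 < k < n$, I conclude that every non-fixed orbit of $\sigma$ has length exactly $p$. Consequently $\sigma$ is the fixed point $0$ together with $m := (n-1)/p = (2^{p}-2)/p$ cycles of length $p$, the integrality of $m$ being Fermat's little theorem $p \mid 2^{p}-2$.

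Putting the pieces together, the fixed point yields the eigenvalue $1$ with multiplicity one while each of the $m$ cycles of length $p$ contributes a copy of the full set of $p$-th roots of unity, i.e.\ a summand unitarily conjugate to $D_{p_N}$; hence $T \sim 1 \oplus D_{p_N} \oplus \dots \oplus D_{p_N}$ with $D_{p_N}$ appearing $m$ times, and the total size checks out as $1 + mp = n$. The one genuinely delicate point is the uniformity of the orbit lengths secured by the arithmetic claim above; the remainder is routine bookkeeping with permutation matrices and roots of unity.
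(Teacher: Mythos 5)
Your proposal is correct and follows essentially the same route as the paper: both realise $T$ as the permutation matrix of the doubling map $k\mapsto 2k \bmod f(N)$ and reduce the spectral claim to the cycle structure, with $0$ the unique fixed point and every other orbit of length $p_N$ because the order of $2$ modulo any divisor $e>1$ of $2^{p_N}-1$ divides the prime $p_N$ and cannot be $1$. Your version is slightly more careful in identifying the orbit length of $k$ as $\operatorname{ord}_{n/\gcd(k,n)}(2)$, but the substance is identical.
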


\begin{proof}
We have $D_{f(N)}=diag(e^0,e^{1c},e^{2c},...,e^{(f(N)-1)c})$, $D_{f(N)}^2=diag(e^0,e^{2c},e^{4c},...,e^{2(f(N)-1)c})$, where $c=\frac{2\pi i}{f(N)}$. Since $f(N)$ is odd number then there exists bijection $\sigma$ between ${(0,1,2,...,f(N)-1)}$ and $(0,2,4,..., 2(f(N)-1))$ modulo $f(N)$. Consider matrix $T$ of permutation of basis vectors $e_j$ corresponding to permutation $\sigma$. It is easy to see that every disjoint $n$-cycle of $\sigma$ corresponds to set $\sqrt[n]{1}$ in spectrum of $T$ (because $T|_L$ is usual shift where $L=span\{T^ke_j\}_k$ for some $j\in (0,1,2,...,f(N)-1)$ which belongs to our disjoint cycle). To show that spectrum of $T$ has desired properties let us examine structure of disjoint cycles of $\sigma$. We have $0\mapsto 0$ - this trivial orbit corresponds to 1 with multiplicity 1 in spectrum. Let us prove that every nonzero $x$ has orbit of length $p_N$. As permutation $\sigma$ is defined via formula $x\mapsto 2x$ $(mod$ $ f(N))$ and since $(2^{p_N}-1)x= f(N)x=0$ $(mod$ $ f(N))$ we have that length of orbit divides $p_N$. But sine $p_N$ is prime number and orbit is nontrivial we have that length of every nontrivial orbit is $p_N$.
\end{proof}

\begin{prop}
For every $\varepsilon>0$, every finite set $K\subset H_\infty$ and every $j>max(K)$ there exists natural number $n$ and map $\varphi:H_\infty\to U_n$ such that:

1) $\|\varphi(kh)-\varphi(k)\varphi(h)\|<\varepsilon$ for every $k,h\in K$.

2) $\|\varphi(k)-1\|\geq 1$ for every nontrivial $k\in K$.

3) $\|\Ad_{\varphi(a_{i+1})}\varphi(a_i)-\varphi(a_i)^2\|<\varepsilon$ where $|i|\leq j$.

4) $\varphi(a_i)\sim_\varepsilon\varphi (a_l)$ for every $i,l$ with ${i},{l}\leq j+1$.
\end{prop}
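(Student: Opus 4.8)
The plan is to put $\varphi$ into the rigid shift-equivariant form $\varphi(a_i)=V^{i}D_{f(N)}V^{-i}$ for a single auxiliary unitary $V$ and the diagonal matrix $D_{f(N)}$ of Proposition 21, with $N$ (hence the prime $p_N$) taken large. This shape is forced by condition 4: every $\varphi(a_i)$ is then exactly unitarily conjugate to $D_{f(N)}$, so all generators share one spectrum and $\varphi(a_i)\sim\varphi(a_l)$ holds on the nose. The unitary $V$ is extracted from Proposition 21, which produces $T$ with $T^{-1}D_{f(N)}T=D_{f(N)}^{2}$ whose spectrum consists of the $p_N$-th roots of unity, each with multiplicity $\tfrac{2^{p_N}-2}{p_N}$, plus one extra $1$. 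As $p_N\to\infty$ these eigenvalues become $\varepsilon$-dense on the circle and their counting function matches that of $D_{f(N)}$ up to a single point, so a one-line eigenvalue-matching argument gives $D_{f(N)}\sim_\varepsilon T$: once $p_N>\pi/\varepsilon$ there is a unitary $V$ with $\|VD_{f(N)}V^{-1}-T\|<\varepsilon$. I write $B:=VD_{f(N)}V^{-1}$, so $\|B-T\|<\varepsilon$.

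Conditions 3 and 4 are then almost immediate. Conjugating by $V^{-i}$ turns the left side of 3) into $B^{-1}D_{f(N)}B$; since $\|B-T\|<\varepsilon$ and conjugation of a fixed unitary is $2$-Lipschitz, $\|B^{-1}D_{f(N)}B-T^{-1}D_{f(N)}T\|<2\varepsilon$, and $T^{-1}D_{f(N)}T=D_{f(N)}^{2}$ exactly, so 3) holds after halving $\varepsilon$. Condition 4 needs no estimate beyond $\varphi(a_i)=V^{i}D_{f(N)}V^{-i}\sim D_{f(N)}$. For condition 1 I extend $\varphi$ to all of $H_\infty$ through the free group: fix words $\omega_k$ with $\pi(\omega_k)=k$ and set $\varphi(k)=\omega_k(\dots,\varphi(a_{-1}),\varphi(a_0),\varphi(a_1),\dots)$. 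For $g,h\in K\subset H_j$ the words $\omega_g\omega_h$ and $\omega_{gh}$ represent the same element and therefore differ by a bounded number $C=C(K)$ of insertions of the defining relators $a_i^{a_{i+1}}a_i^{-2}$ with $|i|\le j$ (where 3) applies) together with free cancellations; each insertion costs at most $2\varepsilon$ by 3), so $\|\varphi(gh)-\varphi(g)\varphi(h)\|<2C\varepsilon$. This is precisely the bookkeeping from the proof of Proposition 8, and 1) follows after rescaling $\varepsilon$.

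The real difficulty is condition 2, and it is sharpened by the rigidity of 4): approximate conjugacy in operator norm is so inflexible that one cannot repair faithfulness by direct-summing an auxiliary faithful representation of $H_j$ (available from the residual solvability of Proposition 20), since any summand whose generators carry index-dependent spectra immediately breaks $\sim_\varepsilon$. Any faithfulness mechanism must therefore be spectrum-preserving. My first move is to reduce 2) to a much weaker non-degeneracy: because $\gamma(u)=\Ad(u)$ is a group homomorphism, $\gamma\circ\varphi(a_i)=\gamma(V)^{i}\gamma(D_{f(N)})\gamma(V)^{-i}$ is again of the equivariant form, so iterating $\gamma$ and then taking a finite direct sum over $k\in K$ — exactly as in Propositions 5 and 6 — preserves 1), 3) and 4) while, by Proposition 4, amplifying any nonzero value of $\operatorname{diam}\sigma(\varphi(k))$ up to distance $\ge\sqrt2$ from $1$. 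Thus it suffices to secure a uniform lower bound $\operatorname{diam}\sigma(\varphi(k))\ge\delta>0$ for every nontrivial $k\in K$; that is, $\varphi(k)$ must fail to be nearly scalar.

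Establishing this spectral spread is where I expect the work to lie. The clean route is to pass to the limit $N\to\infty$: by 3) the images $[\varphi(a_i)]$ in $\prod U_n/\oplus U_n$ satisfy the relations of $H_\infty$ exactly and assemble into a genuine asymptotic homomorphism $\varphi_\infty\colon H_\infty\to U(\prod M_n/\oplus M_n)$ of the shape $a_i\mapsto v^{i}dv^{-i}$ with $d$ a Haar-type unitary, and the desired bound for all large $N$ is equivalent to $\varphi_\infty(k)$ being non-scalar for nontrivial $k$. I would derive this non-degeneracy from Proposition 20: every nontrivial $k\in K$ survives in a finite, hence (by Theorem 9) MF, solvable quotient of $H_j$, and I would match that detecting datum against the equivariant limit — concretely, realize $\varphi_\infty$ as the Koopman representation of the (faithful on $K$) action of $H_j$ on a $\mathbb{Z}[1/2]$-module approximated by $\mathbb{Z}/f(N)$, so that each $\varphi(k)$ becomes a permutation-type unitary whose nontrivial displacement forces $\operatorname{diam}\sigma(\varphi(k))$ away from $0$. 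Reconciling this faithful Koopman model with the rigid $V^{i}D_{f(N)}V^{-i}$ form — equivalently, proving that the limiting equivariant representation is non-degenerate on any prescribed finite set — is the crux of the whole proposition.
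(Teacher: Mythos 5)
Your construction of the ``rigid'' equivariant family $\varphi(a_i)=V^iD_{f(N)}V^{-i}$ handles conditions 1), 3), 4) correctly and is close in spirit to the paper's inductive construction of the summand $\pi_N$ (there one takes $\pi_N(a_{i+1})=S$ with $S\sim D_{f(N)}$ and $S=_{2/p_N}V$, which is your eigenvalue-matching step). The problem is condition 2), and specifically the premise on which you abandon the natural fix. You assert that one \emph{cannot} restore faithfulness by direct-summing an auxiliary faithful almost-representation $\psi$ of $H_{j+1}$ because ``any summand whose generators carry index-dependent spectra immediately breaks $\sim_\varepsilon$.'' This is false, and it is exactly what the paper does. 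The point is dilution: take $\varphi=\psi\oplus\pi_N\oplus\dots\oplus\pi_N$ with $m=\dim\psi$ copies of $\pi_N$. Diagonalizing $\psi(a_i)\sim e^{i\lambda_1}\oplus\dots\oplus e^{i\lambda_m}$ and regrouping, $\varphi(a_i)\sim(e^{i\lambda_1}\oplus D_{f(N)})\oplus\dots\oplus(e^{i\lambda_m}\oplus D_{f(N)})$, and since $e^{i\lambda}\oplus D_n\sim_{1/n}D_{n+1}$ for \emph{every} $\lambda$, this is within $1/f(N)$ of $D_{f(N)+1}\oplus\dots\oplus D_{f(N)+1}$ no matter what the spectra of $\psi(a_i)$ are. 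So condition 4) survives the direct sum, while condition 2) is inherited verbatim from the $\psi$-block ($\|\varphi(k)-1\|\geq\|\psi(k)-1\|\geq 1$), where $\psi$ exists because $H_{j+1}$ is residually solvable (Proposition 20), hence MF, and Proposition 7 gives the uniform lower bound on nontrivial elements of $K$.

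Having discarded that route, you are left proving that the rigid family itself has $\operatorname{diam}\sigma(\varphi(k))$ bounded below for every nontrivial $k\in K$, via a Koopman-type model and a limiting argument. You correctly identify this as ``the crux'' but do not prove it, and it is genuinely hard: nothing in the construction of $V$ from Proposition 21 prevents a nontrivial word in the $V^iD_{f(N)}V^{-i}$ from being close to a scalar. So as written the proposal does not establish condition 2), which is the only condition that actually requires the MF-property of the groups $H_j$; the rest of your argument is sound and essentially parallels the paper.
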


\begin{proof}
As $K\subset H_j$ and there is no occurrence of elements of $H_\infty\backslash H_{j+1}$ in conditions 1)-4) then we can define $\varphi$ on $H_\infty\backslash H_{j+1}$ arbitrary. Since $H_{j+1}$ is MF-group then we can construct $\psi:H_{j+1}\to U_m$ which satisfy conditions 1)-3).

Idea is following: we construct asymptotic representation $\pi_N$ of $H_j$ such that spectrum of $\pi_N(a_i)$ would be uniform. Then $\varphi=\psi\oplus\pi_N\oplus...\oplus \pi_N$ has desired properties, because $\psi$-summand secures faithfulness conditions 2) and a lot of $\pi_N$-summands make spectrum of $\varphi(a_j)$ to be almost uniform, so guarantees condition 4).

Put $\pi_N(a_{-j-1})=D_{f(N)}$. On other generators generators of $H_{j+1}$ define $\pi_N$ by induction. Let we have already construct $\pi_N(a_i)$ such that $\pi_N(a_i)\sim D_{f(N)}$. By Proposition 21 we can find matrix $V$ such that $V^{-1}\pi_N(a_i)V=\pi_N(a_i)^2$ and spectrum of $V$ consists of $1$ with multiplicity 1 and $p_N$-th root of unity with multiplicity $\frac{2^{p_N}-2}{p_N}$ (the reason of using $f(N)$ instead of usual $N$ is that spectrum of $V$ is very simple. We do not know for odd $N$ good characterization of spectrum of matrix $R$ such that $R^{-1}D_NR=D_N^2$). It is easy to see that $D_{f(N)}\sim_{\frac{2}{p_N}} V$ (because we can  eigenvalue $e{^\frac{2\pi i k}{p_N}}$ with multiplicity $\frac{2^{p_N}-2}{p_N}$ uniformly spread on interval $(e^{\frac{2\pi i k}{p_N}},e^{\frac{2\pi i (k+1)}{p_N}})\subset S^1$. Then we should shift all eigenvalues to clear "space" for eigenvalue $1$ with multiplicity $1$). So we can matrix $S$ with properties $S=_{\frac{2}{p_N}} V$ and $S\sim D_{f(N)}\sim\pi_N(a_i)\sim...\sim \pi_N(a_{-j})$. Put $\pi_N(a_{i+1})=S$. It is clear that $\Ad_{\pi_N(a_{i+1})}\pi_N(a_i)=_{O(\frac{1}{p_N})}\pi_N(a_i)^2$ so $\pi_N$ is asymptotic representation and we can find $N$ large enough that almost representation $\pi_N$ satisfy conditions 1)-3).

Put $\varphi=\psi\oplus\pi_N\oplus...\oplus\pi_N$ where in direct sum there are $m$ direct $\pi_N$-summands, $N$ as in the previous paragraph, $m$ is dimension of $\psi$. Obviously $\phi$ satisfy conditions 1)-3). It is easy to see that $e^{i\lambda}\oplus D_n\sim_{\frac{1}{n}} D_{n+1}$ for every $\lambda$. Consider arbitrary generators $a_i$ and $a_l$ of $H_{j+1}$. Every normal matrix can be diagonalized so $\psi(a_{i})\sim e^{i\lambda_1}\oplus ...\oplus e^{i\lambda_m}$ è $\psi(a_l)\sim e^{i\mu_1}\oplus ...\oplus e^{i\mu_m}$ and $\phi(a_{i})\sim e^{i\lambda_1}\oplus ...\oplus e^{i\lambda_m}\oplus \pi_N(a_{i})\oplus... \oplus\pi_N(a_{i})\sim (e^{i\lambda_1}\oplus \pi_N(a_{i}))\oplus... \oplus(e^{i\lambda_m}\oplus \pi_N(a_{i}))\sim (e^{i\lambda_1}\oplus D_{f(N)})\oplus... \oplus (e^{i\lambda_m}\oplus D_{f(N)})\sim_{\frac{1}{f(N)}} D_{f(N)+1}\oplus... \oplus D_{f(N)+1}$. Similarly we can deduce $\phi(a_l)\sim_{\frac{1}{f(N)}} D_{f(N)+1}\oplus... \oplus D_{f(N)+1}$ and so $\phi(a_{i})\sim_\epsilon\phi(a_l)$.
\end{proof}

\begin{prop}
Let $u\in U_n$ and $\varepsilon>0$. Then there exist matrices $u_{-k},u_{-k+1},...,u_k\in U_n$ where $k=[\frac 1\varepsilon]$ with properties:

1) $u_0=1$.

2) $u_i=_{4\varepsilon} u_{i+1}$ for every $i$.

3) $u=u_{-k}u_n^{-1}$
\end{prop}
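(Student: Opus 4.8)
The plan is to realise the entire family $\{u_j\}$ as fractional powers of $u$ along a single one‑parameter subgroup, so that any two consecutive matrices differ by one fixed small unitary and the two endpoints multiply back to $u$. First I would fix a Hermitian logarithm: since $u\in U_n$ is normal, write $u=e^{iH}$ with $H$ Hermitian whose spectrum lies in the principal branch $(-\pi,\pi]$, and set $u^{s}:=e^{isH}$ for $s\in\mathbb{R}$, which is unitary for every real $s$. One may assume $\varepsilon<1$, so that $k=[\tfrac1\varepsilon]\geq 1$ (for $\varepsilon\geq 1$ the statement is vacuous since then $k=0$ forces $u=1$).

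Next I would define the matrices explicitly by $u_j:=u^{-j/(2k)}$ for $-k\leq j\leq k$. Condition 1) is immediate, $u_0=u^0=1$. For condition 3) note that $u_{-k}=u^{1/2}$ and $u_k=u^{-1/2}$, whence $u_{-k}u_k^{-1}=u^{1/2}u^{1/2}=u$; here I read the printed ``$u_n$'' as the misprint $u_k$. The role of passing through the midpoint is exactly that the interpolation $t\mapsto u^{t}$ runs from $u^{1/2}$ to $u^{-1/2}$ while hitting the identity at the centre index $j=0$.

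The estimate in condition 2) is the only place where anything must be checked. Because left multiplication by the unitary $u^{-j/(2k)}$ is isometric, one has $\|u_j-u_{j+1}\|=\|\,1-u^{-1/(2k)}\,\|=\max_{\lambda\in\sigma(H)}|1-e^{-i\lambda/(2k)}|$, a quantity independent of $j$. Using $|1-e^{i\theta}|=2|\sin(\theta/2)|\leq|\theta|$ together with the branch bound $|\lambda|\leq\pi$, this is at most $\pi/(2k)$. It then remains to verify the elementary inequality $\pi/(2k)<4\varepsilon$ for $k=[\tfrac1\varepsilon]$: when $\varepsilon\leq 1-\tfrac\pi8$ one has $k>\tfrac1\varepsilon-1\geq\pi/(8\varepsilon)$, while in the remaining range $1-\tfrac\pi8<\varepsilon<1$ one has $k=1$ and $\pi/2<4\varepsilon$ directly.

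The main (and essentially only) obstacle is the choice of branch for $H$: one must take the arguments in $(-\pi,\pi]$, so that $\|H\|\leq\pi$ and hence the single per‑step size $\|1-u^{-1/(2k)}\|$ is controlled by $\pi/(2k)$; with any larger branch this per‑step bound, and therefore condition 2), could fail. Everything else is a routine computation inside the commutative one‑parameter family $\{u^{s}\}$.
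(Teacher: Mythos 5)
Your proof is correct and takes essentially the same approach as the paper: both rest on the fact that, by diagonalization, $u^{-1}$ lies at geodesic distance at most $\pi$ from $1$ in $U_n$, so one can interpolate along the one-parameter family $t\mapsto e^{itH}$ in $k$ (or $2k$) steps of size $O(\varepsilon)$. The only cosmetic difference is that the paper sets $u_i=1$ for $i\leq 0$ and walks from $1$ to $u_k=u^{-1}$ on the positive indices, whereas you interpolate symmetrically from $u^{1/2}$ to $u^{-1/2}$; both versions satisfy $u_0=1$ and $u_{-k}u_k^{-1}=u$, which is all that the application in Theorem 24 requires, and your explicit parametrization in fact yields the cleaner per-step bound $\pi/(2k)<4\varepsilon$ for every $k\geq 1$.
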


\begin{proof}
Put $u_i=1$ for $i\leq 0$ Minimal length of path between $1$ and $u^{-1}$ in the unitary group is not greater than 4 (because all unitary matrices can be diagonalized and so geodesic distance between $1$ and $u^{-1}$ is not greater than geodesic diameter of unit circle which is equal to $\pi$). So we can find matrices $u_0,u_1,...,u_k$ with $u_0=1$, $u_k=u^{-1}$ and $u_i=_{4\varepsilon} u_{i+1}$.
\end{proof}

\begin{thm}
Group $B=\langle a,b|a^{a^b}=a^2\rangle$ is MF-group.
\end{thm}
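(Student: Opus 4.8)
The plan is to realize $B$ as the crossed product $H_\infty \rtimes_\sigma \mathbb{Z}$ furnished by Proposition 16, where $\sigma$ is the index shift $a_i \mapsto a_{i+1}$, and to produce a faithful MF-representation of this crossed product by extending a faithful almost representation of $H_\infty$ across the $\mathbb{Z}$-factor. The crucial simplification is that \emph{injectivity comes for free}: any honest homomorphism $\beta : H_\infty \rtimes_\sigma \mathbb{Z} \to \prod U_N/\oplus U_N$ whose restriction to $H_\infty$ is injective is automatically injective, by Lemma 17 together with Proposition 18 (all nonzero powers of $\sigma$ are non-inner). Hence the whole burden of the argument is analytic: to manufacture an asymptotic homomorphism of $B$ whose restriction to $H_\infty$ is faithful.

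First I would fix $\varepsilon > 0$ and a finite set $K \subset B$, and record the finitely many $H_\infty$-components and the largest $\mathbb{Z}$-power $p$ occurring in $K$ and in the products needed to test multiplicativity. I would then feed an enlarged finite set $\tilde K \subset H_\infty$ — one containing all shifts $\sigma^s$ of the relevant elements for $|s|$ up to a large integer $m$ — together with a suitable $j > \max(\tilde K)$ into Proposition 22, obtaining a map $\varphi : H_\infty \to U_n$ which is $(\delta,\tilde K)$-multiplicative, faithful on $\tilde K$, respects the relations $a_i^{a_{i+1}}=a_i^2$ up to $\delta$, and — the decisive point — has all generators $\varphi(a_i)$, $|i|\le j+1$, with essentially the same uniform spectrum, so that they are mutually unitarily equivalent up to $\delta$. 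Forming the amplification $\Phi = \bigoplus_{s=0}^{m-1}\varphi\circ\sigma^s$ on $\mathbb{C}^n\otimes\mathbb{C}^m$, I get a map that is again asymptotically multiplicative (a direct sum of the asymptotic homomorphisms $\varphi\circ\sigma^s$ over the enlarged set) and faithful, since its $s=0$ block is $\varphi$.

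The heart of the matter is the construction of a single unitary $W$ on $\mathbb{C}^n\otimes\mathbb{C}^m$, serving as $\beta(t)$, with $W\,\Phi(a_i)\,W^{-1} =_{o(1)} \Phi(a_{i+1})$ over the whole relevant range of $i$ simultaneously. A naive block-cyclic shift $s\mapsto s+1$ implements $\sigma$ exactly on all but one block, but at the wrap-around it would identify $\varphi(a_i)$ with $\varphi(a_{i+m})$, two generators that are only \emph{unitarily equivalent}, not norm-close, producing an $O(1)$ error concentrated at a single junction. The way out is to distribute this discrepancy: using the uniform-spectrum conclusion of Proposition 22 to name the unitary conjugating one generator to another, and the small-step path of Proposition 23 (with $\sim 1/\varepsilon$ steps, each of size $\le 4\varepsilon$) to break that conjugator into increments, I would define $W$ as a shift twisted by these increments spread over all $m\sim 1/\varepsilon$ blocks. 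Then the intertwining error $\|W\Phi(a_i)W^{-1}-\Phi(a_{i+1})\|$ is a maximum of local block errors, each of size $O(1/m)=O(\varepsilon)$, and hence tends to $0$.

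Finally I would set $\beta(h t^k)=\Phi(h)W^k$ and verify that this yields an asymptotic homomorphism of $B$: multiplicativity inside $H_\infty$ and inside $\langle t\rangle$ is automatic, while the sole crossed relation $t a_i t^{-1}=a_{i+1}$ holds asymptotically by the preceding step. Letting $\varepsilon\to 0$ along an exhaustion $K_1\subset K_2\subset\cdots$ of $B$ and assembling as in the proof of Proposition 6 produces an honest homomorphism $\beta : B \to \prod U_N/\oplus U_N$ whose restriction to $H_\infty$ stays injective (each nontrivial element keeps $\|\beta(\cdot)-1\|\ge 1$). By Lemma 17 and Proposition 18, $\beta$ is then injective on all of $B$, so $B$ is an MF-group. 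The single genuine difficulty is the construction of $W$ in the third step, and it is precisely the uniform spectra of Proposition 22 together with the stepwise interpolation of Proposition 23 that make it possible.
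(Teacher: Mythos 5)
Your proposal is correct and follows essentially the same route as the paper: reduce to faithfulness on $H_\infty$ via Lemma 17 and Proposition 18, take the uniform-spectrum almost representation of Proposition 22, and use the small-step path of Proposition 23 to spread the wrap-around conjugator over $\sim 1/\varepsilon$ blocks of a block-cyclic shift. The only (cosmetic) difference is that you put the interpolating twist into the shift $W$, whereas the paper conjugates the individual blocks of $A$ by the $v_i$ and keeps the shift $B=\id\otimes T^*_{2j+1}$ plain — the two constructions are unitarily equivalent.
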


\begin{proof}
By Lemma 17 it is enough to construct asymptotic representation of $B$ which is faithful on $H_\infty\subset B$. Due to Proposition 8 it is enough to find for every $\varepsilon>0$ and every finite $K\subset H_\infty$ matrices $A$ and $B$ with properties:

1) $\|\omega(\{B^{-i}AB^i\})-1\|\geq 1$ for every nontrivial $\omega\in K$

2) $\Ad_{B^{-1}AB}A=_{O(\varepsilon)} A^2$

Put $k_0=max(K)$, $N=[\frac{1}{\varepsilon}]$. Let $j$ be natural number such that $2j+1=(2k_0+1)(2N+1)$. Consider almost representation $\varphi: H_\infty\to U_n$ from the Proposition 22. Due to condition 4) of Proposition 22 there exists unitary $u\in U_n$ such that $\Ad_u\varphi(a_{-j})=_\varepsilon\varphi(a_{j+1})$. Applying Proposition 23 to $u$ we get matrices $u_{-N},u_{-N+1},...,u_N$. Let us construct matrices $v_{-j},v_{-j+1},...,v_j$ in the following way: $v_{-j}=v_{-j+1}=...=v_{-j+2k_0}=u_{-N}$, $v_{-j+2k_0+1}=...=v_{-j+4k_0+1}=u_{-N+1}$,...,$v_{j-2k_0}=...=v_{j}=u_N$. More precisely $v_i=u_{\lfloor\frac{i+j}{2k_0+1}\rfloor-N}$.
Let us put
$$A=\Ad_{v_{-j}}\varphi(a_{-j})\oplus...\oplus\Ad_{v_j}\varphi(a_j)$$
$$B=id\otimes T^*_{2j+1}$$
where $A,B\in U(M_n\otimes M_{2j+1})=U(M_{n(2j+1)})$, $B$ is shift matrix which permutate blocks in block structure of matrix $A$, i.e. $$B^{-1}AB=\Ad_{v_{-j+1}}\varphi(a_{-j+1})\oplus...\oplus\Ad_{v_j}\varphi(a_j)\oplus \Ad_{v_{-j}}\varphi(a_{-j})$$.
 Let $P_i$ be projections corresponding to block structure of $A$ i.e. for which $P_iAP_i=0\oplus...\oplus 0\oplus \Ad_{v_i}\varphi(a_i)\oplus 0 \oplus...\oplus 0$. For notation convenience we may think that $P_iCP_i\in M_n$ for every matrix $C\in M_{n(2j+1)}$ and in these terms $P_iAP_i=\Ad_{v_i}\varphi(a_i)$.

Let us check property $\Ad_{B^{-1}AB}A=_{O(\varepsilon)}A^2$. Put $R=\Ad_{B^{-1}AB}A-A^2$. If $i\neq j$ then since $v_i=_{4\varepsilon}v_{i+1}$ and $\Ad_{\varphi(a_{i+1})}\varphi(a_i)=_\varepsilon \varphi(a_i)^2$ we have inequality $\|P_iRP_i\|=\|\Ad_{v_{i+1}^{-1}\varphi(a_{i+1})v_{i+1}}(v_i^{-1}\varphi(a_i)v_i)-v_i^{-1}\varphi(a_i)^2v_i\|\leq 17\varepsilon$. If $i=j$ then since $\Ad_{\varphi(a_{j+1})}\varphi(a_j)=_\varepsilon\varphi(a_j)^2$, $u=v_{-j}v_j^{-1}$ and $\Ad_u\varphi(a_{-j})=_\varepsilon\varphi(a_{j+1})$ we have $\|P_jRP_j\|=\|\Ad_{v_{-j}^{-1}\varphi(a_{-j})v_{-j}}(v_j^{-1}\varphi(a_j)v_j)-v_j^{-1}\varphi(a_j)^2v_j\|\leq 3\varepsilon$. Since $R$ has the same block structure as $A$ it is easy to see that $R=\oplus P_iRP_i$ and $\Ad_{B^{-1}AB}A=_{17\varepsilon} A^2$.

Let us check property $\|\omega(\{B^{-i}AB^i\})-1\|\geq 1$ for all nontrivial $\omega\in K\subset H_{k_0}\subset H_\infty$. Arbitrary $\omega$ has the following form $\omega(\{B^{-i}AB^i\})=B^{-i_1}AB^{i_1}B^{-i_2}AB^{i_2}...B^{-i_l}AB^{i_l}$ with $|i_m|\leq k_0$. But since $B^{-i}AB^i$ has the same block structure as $A$, $P_0(B^{-i}AB^i)P_0=\varphi(a_i)$ and $v_{-k_0}=....=v_k=u_0=1$ we have that
$$\|\omega(\{B^{-i}AB^i\})-1\|\geq\|P_0\omega(\{B^{-i}AB^i\})P_0-P_0\|=\|\omega(\{\varphi(a_i)\})-1\|\geq 1$$
This finishes the proof.
\end{proof}

\begin{remark}
We think that it would be interesting to examine MF-properties for groups of the form $\langle a,b|\omega(a,a^b)=1\rangle$ where group $\langle a,b|\omega(a,b)=1\rangle$ is not very difficult. If we trying to follow similarly way we see the main difficulties in construction asymptotic representation of $\langle a,b|\omega(a,b)=1\rangle$ for which spectrums of generators are almost uniform and checking MF-property for groups of the form $\langle a,b|\omega(a,b)=1\rangle \underset{\mathbb{Z}}{\star}...\underset{\mathbb{Z}}{\star}\langle a,b| \omega(a,b)=1\rangle$. In our case of Baumslag group the second problem was not considerable due to theorem of Azarov and Tieudjo and nilpotence of group $\langle a,b|b^{-1}ab=a^2\rangle$.
\end{remark}

\begin{remark}
We use in proof exotic matrix size $2^{p_n}-1$ because of simplicity of spectrum of corresponding shift matrix. But we think it is very interesting to examine uniform properties for unitary matrix $T$ for which $T^{-1}D_nT=D_n^2$. For example we think that it is important to compute $\lim_{n\to \infty}\frac{\#\{x\in \sigma(T)| x\in(a,b)\}}{n}$ for segment of circle $(a,b)\subset S^1$.
\end{remark}

{\bf Acknowledgement.} The author is grateful to V.M. Manuilov for fruitful discussions and advices.

\end{document}